\newtheorem{proposition}{Proposition}
\newtheorem{lemma}{Lemma}
\newtheorem{theorem}{Theorem}
\newtheorem*{theorem*}{Theorem}
\newtheorem{corollary}{Corollary}
\theoremstyle{definition}
\newtheorem{definition}{Definition}
\theoremstyle{remark}
\newtheorem {remark}{Remark}
\def\BG{{\mathbb G}}
\def\BK{{\mathbb K}}
\def\BG{{\mathbb G}}
\def\BC{{\mathbb C}}
\def\BK{{\mathbb K}}
\def\BP{{\mathbb P}}
\def\BA{{\mathbb A}}
\def\Soc{\mathrm{Soc}}
\def\mf{\mathfrak{m}}
\title[Limit points of one-parameter subgroups for additive actions]{Limit points of one-parameter subgroups for additive actions on hypersurfaces}
\thanks{The paper was supported by the grant RSF 23-71-01100}
\author{Anton Shafarevich}
\email{shafarevich.a@gmail.com}
\address{
Lomonosov Moscow State University, Faculty of Mechanics and Mathematics, Department of Higher Algebra, Leninskie Gory 1, Moscow, 119991 Russia;
\linebreak
and
\linebreak
HSE University, Faculty of Computer Science, Pokrovsky Boulevard 11, Moscow, 109028, Russia}
\subjclass[2020]{Primary 14L30, 14J70; Secondary 13E10, 14L24.}
\keywords{Algebraic variety, algebraic group, additive action, local algebra, projective space, projective hypersurface.}
\begin{document}
\maketitle

\begin{abstract}
By an additive action on an algebraic variety $X$ over $\BC$, we mean an action of the group $\mathbb{G}_a^n = \mathbb{C}^n $ on $X$ with an open orbit. We study limit points of one-dimensional subgroups of $\mathbb{G}_a^n$ for additive actions on projective hypersurfaces. We say that an additive action on $X$ satisfies the \textbf{OP}-condition if for every point $x\in X $ that does not lie in the open orbit $O$ there is a point $y \in O$ and a vector $v \in \mathbb{G}_a^n$ such that $ \lim_{t\to\infty} tv\circ y = x$. We find all projective hypersurfaces on which there is an additive action satisfying the \textbf{OP}-condition. \end{abstract}

\section{Introduction}
We denote by $\mathbb{G}_a$ the additive group of the field $\mathbb{C}$ and by $\mathbb{G}_a^n$ the product of $n$ copies of groups $\mathbb{G}_a$. 

\begin{definition}
    An \textbf{additive action} on an algebraic variety $X$ over $\mathbb{C}$ is an effective action $\alpha: \mathbb{G}_a^n\times X \to X$ with an open orbit in the Zariski topology.
\end{definition}

Apparently, the problem of describing all pairs $(X, \alpha)$, where $X$ is an algebraic variety and $\alpha$ is an additive action on $X$, is too ambitious. It was shown in \cite{HT} that additive actions on the projective space $\mathbb{P}^n$ are in natural bijection with local commutative associative algebras over $\mathbb{C}$ of dimension $n+1$, and the problem of classifying local algebras is quite old and complex. Therefore, it seems natural to study additive actions that satisfy some additional natural conditions.

In \cite{AB}, matroid Schubert varieties  were introduced. These are varieties obtained in the following way. The group $\mathbb{G}_a$ is naturally embedded in $\mathbb{P}^1$ as an affine chart. Therefore, the group $\mathbb{G}_a^n$ is embedded in $(\mathbb{P}^1)^{n}$. If we consider a subspace $V \subseteq \mathbb{G}_a^n$, then the associated matroid Schubert variety is the closure of $V$ in $(\mathbb{P}^1)^n$. Schubert matroid varieties were used by Jun Huh and Botong Wang in proving the Downing-Wilson conjecture for realizable matroids; see \cite{HW}.

One can show that the action of $V$ on itself can be extended to an action of $V$ on the corresponding matroid Schubert variety, so every matroid Schubert variety admits an additive action. It can be observed that the additive action on a matroid Schubert variety satisfies the following two properties. First, the number of orbits is finite. The second property is as follows. 

\begin{definition}

We say that an additive action $\alpha$ of a group $\mathbb{G}_a^n$ on an algebraic variety $X$ satisfies the \textbf{OP}-condition (condition on one-parametric subgroups) if for every $\mathbb{G}_a^n$-orbit $O'$ in $X$ there is a point $x$ in the open orbit $O$ in $X$ and a one-dimensional algebraic subgroup $H$ in $\mathbb{G}_a^n$ such that the closure in $X$ of $H$-orbit of $x$ intersects $O'.$

\end{definition}

For an equivalent definition, see Corollary \ref{defcor}. It was shown in \cite{CC} that the additive action on a matroid Schubert variety satisfies the \textbf{OP}-condition. Moreover, the following statement is true.

 \begin{theorem*}\cite[Theorem A]{CC}
     Let $X$ be a normal complete variety and $\alpha: \mathbb{G}_a^n\times X \to X$ be an additive action. Then the following conditions are equivalent.
    \begin{enumerate}
        \item There are finitely many orbits on $X$ with respect to $\alpha$, and $\alpha$ satisfies the \textbf{OP}-condition.
        \item The variety $X$ is a matroid Schubert variety, and $\alpha$ is the corresponding additive action on~$X$.
     \end{enumerate}
 \end{theorem*}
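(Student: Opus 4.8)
The plan is to prove the two implications separately: $(2)\Rightarrow(1)$ is a direct verification, while $(1)\Rightarrow(2)$ carries essentially all the content. For $(2)\Rightarrow(1)$, let $X=\overline V\subseteq(\mathbb P^1)^N$ be the matroid Schubert variety of a subspace $V\subseteq\mathbb C^N$, with $\ell_1,\dots,\ell_N\in V^\ast$ the restrictions of the coordinate functionals and $M$ the associated matroid. From \cite{AB} one has the stratification of $X$ by $V$-orbits indexed by the flats of $M$, which in particular gives finitely many orbits; for the \textbf{OP}-condition, given a flat $F$ one chooses $w\in V$ with $\{i:\ell_i(w)=0\}=F$ — such $w$ exists precisely because $F$ is a flat — and checks that $\lim_{t\to\infty}tw\circ v$ exists and lies in $O_F$ for generic $v\in V$. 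Both points are routine once the orbit structure of $\overline V$ is in hand.

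For $(1)\Rightarrow(2)$, denote by $O\cong\mathbb G_a^n$ the open orbit and by $D_1,\dots,D_k$ the prime divisors contained in $X\setminus O$; since $O$ is affine and $X$ complete at least one $D_j$ exists, and because $\mathcal O(O)^\times=\mathbb C^\times$ and $\Cl(\mathbb A^n)=0$, the excision sequence for divisor class groups gives $\Cl(X)=\bigoplus_j\mathbb Z D_j$, free on the boundary divisors — this is the substitute for the ``number of coordinates'' of the target $(\mathbb P^1)^k$. As $\mathbb G_a^n$ is abelian, the stabilizer of a point is constant on each orbit; for the $(n-1)$-dimensional orbit $O_j$ dense in $D_j$ this common stabilizer is a one-dimensional subgroup $L_j\subseteq\mathbb G_a^n$, which fixes $O_j$ and hence all of $D_j$ pointwise. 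The crucial step is to produce, for each $j$, an equivariant morphism $\phi_j\colon X\to\mathbb P^1$ with $\phi_j^{-1}(\infty)=D_j$. For this I look at the finite-dimensional space $\Gamma(X,\mathcal O_X(D_j))$, on which $\mathbb G_a^n$ acts; since the group is unipotent the constants sit inside a flag of invariant subspaces, and any section $f$ with $v\cdot f-f\in\mathbb C$ for all $v$ restricts on $O$ to an affine-linear function, with linear part a functional $\ell_j\in(\mathbb G_a^n)^\ast$ which one checks does not vanish on $L_j$. The associated rational map $X\dashrightarrow\mathbb P^1$ is defined in codimension one, and by the local structure of the $L_j$-action near the generic point of $D_j$ (where $L_j$ fixes the divisor, so in a formal normal coordinate $u$ it acts by $u\mapsto u/(1+su)$ and $f$ has a first-order pole), it extends to a morphism $\phi_j$ whose polar locus is exactly $D_j$.

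Assembling these gives $\Phi=(\phi_1,\dots,\phi_k)\colon X\to(\mathbb P^1)^k$, equivariant for the homomorphism $\mathbb G_a^n\to\prod_j\mathbb G_a$, $v\mapsto(\ell_j(v))_j$; on $O$ it is the affine-linear map $v\mapsto(\ell_j(v))_j$, which is injective because the $\ell_j$ span $(\mathbb G_a^n)^\ast$ — a functional vanishing on every $L_j$ would give a nontrivial translation subgroup fixing a point of every orbit, which the \textbf{OP}-condition forbids. Then $\Phi$ is proper and birational onto its image; once it is known to be quasi-finite it is finite, and since $X$ and the image are normal it is an isomorphism onto $\overline{\Phi(O)}$. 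As $\Phi(O)$ is a linear subspace of $\mathbb C^k\subseteq(\mathbb P^1)^k$, this closure is by construction the matroid Schubert variety of the matroid realized by $\ell_1,\dots,\ell_k$, and unwinding the construction identifies $\alpha$ with the standard additive action. The principal obstacle is concentrated in the middle: showing that $\Gamma(X,\mathcal O_X(D_j))$ is genuinely larger than the constants — equivalently that $D_j$ is cut out ``at infinity'' by an actual rational function on $X$ — and that the resulting $\Phi$ is injective on all of $X$, not merely on $O$. Both are exactly where the hypotheses of $(1)$ are indispensable: finiteness of the orbit set keeps the boundary combinatorics tractable and makes the orbit poset a finite lattice, while the \textbf{OP}-condition forces every boundary orbit to arise as a coordinatewise-at-infinity limit of a one-parameter subgroup from $O$, which is precisely what lets $\Phi$ separate orbits and identifies the orbit poset with the lattice of flats of a matroid.
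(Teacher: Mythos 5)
This statement is not proved in the paper at all: it is quoted verbatim from Crowley \cite{CC} (Theorem A there), so there is no internal proof to compare your attempt against. Judged on its own terms, your direction $(2)\Rightarrow(1)$ is essentially fine: the orbit stratification of $\overline{V}$ by flats from \cite{AB} gives finiteness of orbits, and the computation that $\lim_{t\to\infty}\bigl(\ell_i(v)+t\ell_i(w)\bigr)_i$ lands in the stratum indexed by the flat $\{i:\ell_i(w)=0\}$ is correct and complete in outline.

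The direction $(1)\Rightarrow(2)$, however, is a sketch with the two decisive steps left open, and you say so yourself. First, you never actually produce the non-constant section of $\Gamma(X,\mathcal O_X(D_j))$: on a general normal complete variety a prime divisor $D$ can perfectly well have $h^0(\mathcal O_X(D))=1$, so the existence of an equivariant morphism $\phi_j\colon X\to\mathbb P^1$ with polar divisor exactly $D_j$ is precisely the content to be proved, and you do not explain how the \textbf{OP}-condition or the finiteness of orbits forces it (the natural route --- taking $v$ with $\lim_{t\to\infty}tv\circ x_0$ in the dense orbit of $D_j$ and extracting a linear functional from the rate of approach --- is not carried out). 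Second, the final identification is logically backwards: you invoke ``$X$ and the image are normal'' to upgrade the finite birational map $\Phi$ to an isomorphism, but Zariski's main theorem needs normality of the \emph{target} $\overline{\Phi(O)}$, which at that stage is exactly the matroid Schubert variety whose normality is part of what must be established; a normal source only tells you that $X$ is the normalization of the image. Quasi-finiteness and injectivity of $\Phi$ on the boundary, which you again defer to the \textbf{OP}-condition without an argument, are likewise unproven. So the proposal is a plausible roadmap --- quite possibly close to Crowley's actual strategy of assembling an embedding into $(\mathbb P^1)^k$ out of the boundary divisors --- but it is not a proof.
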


The class of matroid Schubert  varieties is quite specific. For example, a matroid Schubert variety is smooth if and only if it is isomorphic to $(\mathbb{P}^1)^n$. Therefore, it is interesting to relax the restrictions on the additive action. Projective hypersurfaces admitting an additive action with a finite number of orbits were classified in \cite{BCS}. In \cite{Sha3} we studied complete toric varieties that admit an  action of some unipotent (not necessarily commutative) algebraic group with finitely many orbits.

In this paper we investigate additive actions satisfying the \textbf{OP}-condition. We show that on $\BP^n$ there is exactly one additive action satisfying the \textbf{OP}-condition for each $n\geq 1$; see Proposition \ref{pnop}. We also prove that if $X\subseteq \BP^{n+1}$ is a hypersurface admitting an additive action that can be extended to an action on $\BP^{n+1}$, then X is either a hyperplane or the quadric
$$X = \{2z_0z_{k+1} = z_1^2 + \ldots + z_k^2\} \subseteq \BP^{n+1}$$
where $1 \leq k \leq n$, or the cubic hypersurface
$$X = \{z_0^2z_3 - z_0z_1z_2 + \frac{z_1^3}{3} = 0 \} \subseteq \BP^{n+1} $$
where $n\geq 2;$ see Theorem \ref{maintheorem} and Corollary \ref{maincor}.

\section{Additive actions on projective spaces}

In this section we recall some facts about additive actions. By local algebra we always mean an associative commutative algebra with unity over $\mathbb{C}$ and with a unique maximal ideal. Let $A$ be a local algebra of dimension $n+1$ and $\mf$ be the maximal ideal in $A$. Then, using $A$, one can construct an additive action on $\BP^n$ in the following way. Any element $m\in \mf$ is nilpotent. Therefore, the element $\exp(m) = \sum_{i=0}^{\infty} \frac{m^i}{i!} \in A$ is well-defined. The additive group of $\mf$ is isomorphic to $\BG_a^n$, and we can define an action of the additive group of $\mf$ on $A$ as follows:

$$m\circ a = \exp(m)a,\ m \in \mf,\ a\in A.$$
Then $\mf$ acts on the projective space $\BP(A) \simeq \BP^n$:
$$m\circ [a] = [\exp(m)a],\ m \in \mf,\ a\in A\setminus\{0\}.$$
Here and thereafter by $[a]$ we denote the image of an element $a\in A\setminus\{0\}$ in $\BP(A)$. The action of $\mf$ on $\BP(A)$ described above is algebraic. 

The set $\exp{(\mf)} = \{\exp(m) \mid m\in \mf\}\subseteq A$ coincides with the set $1 + \mf$. The inclusion $\exp(\mf) \subseteq 1 + \mf$ is trivial. To see the other inclusion, we can consider the map:

$$1 +m \to \ln(1+m) = \sum_{i>0}(-1)^i\frac{m^i}{i} \in \mf$$
which is well-defined on $1 + \mf$ and $\exp(\ln(1+m)) = 1+m.$ 

It implies that the $\mf$-orbit of $[1]$ is the set $[1 + \mf]\simeq \BA^n$, which is an open affine chart in~$\BP^n$. So the action of $\mf$ on $\BP^n$ is an additive action.

\begin{theorem*}\cite[Proposition 2.15]{HT}\label{hassett}
The correspondence described above establishes a bijection between

\begin{enumerate}

     \item equivalence classes of additive actions on $\BP^n$; 
    \item isomorphism classes of local algebras of dimension $n+1$.
\end{enumerate}
\end{theorem*}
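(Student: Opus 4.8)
\emph{Proof plan.}

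The plan is to exhibit an explicit inverse to the correspondence recalled above, which sends a local algebra $A$ of dimension $n+1$ to the additive action of its maximal ideal $\mf$ on $\BP(A)\cong\BP^n$. Given an arbitrary additive action on $\BP^n$, I will attach to it a local algebra of dimension $n+1$, canonically up to isomorphism, and verify that the two constructions are mutually inverse.

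The first step is to linearize. Write $\BP^n=\BP(V)$ with $\dim V=n+1$, so that an additive action is an injective homomorphism $\BG_a^n\to\mathrm{PGL}(V)$. Since $\BG_a^n$ is connected unipotent, its image lies, after conjugation, in the unipotent radical of a Borel subgroup of $\mathrm{PGL}(V)$, that is, in the image of the group $U$ of unipotent upper-triangular matrices of $\mathrm{GL}(V)$; as $U$ meets the scalars only in the identity, the homomorphism lifts to a representation $\rho\colon\BG_a^n\to\mathrm{GL}(V)$, and this lift is unique because $\homo(\BG_a^n,\BG_m)=0$. Differentiating $\rho$ produces pairwise commuting nilpotent operators $N_1,\dots,N_n\in\mathrm{End}(V)$, linearly independent by effectiveness, with $\rho(t_1,\dots,t_n)=\exp(t_1N_1+\dots+t_nN_n)$.

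Next I would set $A\subseteq\mathrm{End}(V)$ to be the unital subalgebra generated by $N_1,\dots,N_n$. It is commutative and associative, and since the $N_i$ commute and are nilpotent, every element of $A$ is a scalar plus a nilpotent operator; hence $A=\BC\oplus\mathfrak n$, where $\mathfrak n$ is the set of nilpotent elements of $A$, so $A$ is automatically local with maximal ideal $\mf=\mathfrak n$. The heart of the argument is that $V$ is free of rank one over $A$. Pick $v_0\in V$ with $[v_0]$ in the open orbit. Openness forces the differential at the identity of the orbit map $g\mapsto[\rho(g)v_0]$ to be surjective, which means precisely that $v_0,N_1v_0,\dots,N_nv_0$ span $V$; hence $Av_0=V$, so $V\cong A/\Ann(v_0)$ as $A$-modules. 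Conversely, if $X\in A$ annihilates $v_0$, then $X$ commutes with every $\rho(g)\in A$, so $X$ annihilates the dense set $\{\rho(g)v_0\}$ and therefore acts as $0$ on $V$, i.e.\ $X=0$. Thus $\Ann(v_0)=0$, the map $X\mapsto Xv_0$ is an isomorphism $A\to V$, and $\dim A=n+1$; consequently $\dim\mf=n$ and the independent vectors $N_1,\dots,N_n$ form a basis of $\mf$. Transporting the action along $A\cong V$ turns $\rho(t_1,\dots,t_n)$ into multiplication by $\exp(t_1N_1+\dots+t_nN_n)\in A$ and the point $[Xv_0]$ into $[X]$; combined with the group isomorphism $\BC^n\to\mf$, $(t_i)\mapsto t_1N_1+\dots+t_nN_n$, this exhibits the given action as equivalent to the additive action attached to $A$. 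Surjectivity follows; note also that the construction from $A$ always gives an effective action, since an element acting trivially would have scalar exponential and hence vanish.

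Finally I would check that $A$ is well defined up to isomorphism and that inequivalent actions give non-isomorphic algebras. An isomorphism of local algebras $A\cong A'$ transports the associated data on $\BP(A)$ to that on $\BP(A')$ and so is an equivalence of additive actions. Conversely, $A$ was realized inside $\mathrm{End}(V)$ as the subalgebra generated by the image of the differential of $\rho$, and since $\rho$ is the unique linear lift, this subalgebra depends only on the action on $\BP(V)$; an equivalence of additive actions then lifts to a linear isomorphism $V\to V'$ conjugating one such subalgebra onto the other, the accompanying automorphism of $\BG_a^n$ merely reparametrizing the generators without changing the algebra they generate, so $A\cong A'$. The step I expect to be most delicate is not the algebraic core---which is short---but the careful bookkeeping of the equivalence relation on additive actions, together with the linearization and the uniqueness of the lift from $\mathrm{PGL}(V)$ to $\mathrm{GL}(V)$; once those are in place, both directions of the correspondence follow.
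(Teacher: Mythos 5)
Your argument is correct. The paper cites this statement from \cite{HT} without giving a proof, describing only the algebra-to-action direction; your proposal reproduces that direction and supplies the standard inverse construction (lift the $\BG_a^n$-action from $\mathrm{PGL}(V)$ to $\GL(V)$ via unipotence, let $A\subseteq\mathrm{End}(V)$ be generated by the commuting nilpotents $N_i$, and use a cyclic vector over the open orbit to identify $V$ with $A$ as an $A$-module), which is exactly the argument of Hassett--Tschinkel and of the survey \cite{AZ}, so there is nothing to correct.
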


It is not difficult to describe the orbits of the additive action corresponding to the local algebra $A$.

\begin{proposition}\cite[Corollary 1.13]{AZ}\label{morb}
    Let $A$ be a local algebra of dimension $n+1$ and $\mf$ be the maximal ideal in $A$. Consider the additive action on $\BP(A)$ corresponding to $A$. Then the $\mf$-orbit of an element $[a]$ for $a\in A\setminus\{0\}$ is the set $[aA^*]$ where $A^*$ is the group of invertible elements in $A$.
\end{proposition}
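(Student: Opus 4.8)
The plan is to unwind the definition of the orbit and combine it with two elementary facts: the identity $\exp(\mf)=1+\mf$ already established above, and the explicit description of the unit group $A^*$ of a local algebra. By definition of the action $m\circ[a]=[\exp(m)a]$, the $\mf$-orbit of $[a]$ is
$$\{\,[\exp(m)\,a]\mid m\in\mf\,\}=[\exp(\mf)\cdot a].$$
Since it was shown that $\exp(\mf)=1+\mf$, this orbit is exactly $[(1+\mf)a]$. So the whole statement reduces to checking that $[(1+\mf)a]=[aA^*]$ inside $\BP(A)$.

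To see this I would describe $A^*$. Because $A$ is local with maximal ideal $\mf$ and $A/\mf\cong\BC$, every element of $A$ is uniquely $\lambda\cdot 1+m$ with $\lambda\in\BC$ and $m\in\mf$, and such an element is invertible precisely when $\lambda\neq 0$ (if $\lambda=0$ it lies in the proper ideal $\mf$ and cannot be a unit; if $\lambda\neq 0$ then $\lambda^{-1}(\lambda\cdot 1+m)=1+\lambda^{-1}m$ is a unit since $\lambda^{-1}m$ is nilpotent). Hence $A^*=\BC^*\cdot(1+\mf)$. Passing to $\BP(A)$ discards the scalar factor: $[aA^*]=[a\,\BC^*(1+\mf)]=[a(1+\mf)]$. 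Comparing with the previous paragraph yields that the orbit of $[a]$ equals $[aA^*]$, as claimed.

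There is essentially no obstacle in this argument; the only step requiring care is the identity $\exp(\mf)=1+\mf$, which has already been supplied in the excerpt and rests on the nilpotence of the elements of $\mf$, making $\exp$ and $\ln$ mutually inverse polynomial maps between $\mf$ and $1+\mf$. One may also remark, as a sanity check on the shape of the answer, that this description makes the orbit decomposition of $\BP(A)$ transparent: two points $[a],[b]$ lie in the same orbit iff $b\in aA^*$, i.e.\ iff $a$ and $b$ generate the same principal ideal of $A$.
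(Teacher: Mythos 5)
Your argument is correct and is exactly the standard one: the paper itself cites this result from [AZ, Corollary 1.13] without proof, but the two ingredients you use ($\exp(\mf)=1+\mf$ and $A^*=\BC^*(1+\mf)$, with the scalar factor absorbed by projectivization) are precisely the facts set up in the preceding discussion, so your proof fills the citation in the intended way.
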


Now we describe the local algebras that correspond to additive actions on $\BP^n$ satisfying the \textbf{OP}-condition. We  begin with some basic observations.

\begin{lemma}\label{limdef}
    Let $X$ be a complete variety and the group $\BG_a$ acts on $X$. Suppose that $x\in X$ is not a fixed point with respect to this action. We denote by $O$ the $\mathbb{G}_a$-orbit of~$x$. Then $\overline{O} = O\cup \{y\}$ for some $\BG_a$-fixed point $y\in X \setminus O$.
    
\end{lemma}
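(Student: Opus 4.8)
The plan is to use the fact that a one-dimensional unipotent group $\BG_a$ acting on a complete variety has a very restricted orbit geometry: the orbit $O$ of a non-fixed point $x$ is the image of an injective morphism $\BG_a \to X$, hence a locally closed curve isomorphic to $\BA^1$, and its closure $\overline{O}$ is an irreducible complete curve. So $\overline{O} \setminus O$ is a finite set of points, each of which is $\BG_a$-invariant (since $\overline{O}$ is $\BG_a$-invariant and the action permutes the finitely many boundary points while being connected, so it fixes each one). The content of the lemma is that there is exactly one such point.

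First I would record that the orbit map $t \mapsto t\circ x$ gives an isomorphism $\BG_a \xrightarrow{\sim} O$: it is injective because the stabilizer of $x$ is a proper closed subgroup of $\BG_a$, hence trivial (the only proper algebraic subgroup of $\BG_a$ is $\{0\}$), and a bijective morphism from $\BA^1$ onto a smooth curve — or one can argue the orbit is smooth as a homogeneous space and conclude $O \cong \BA^1$. Then $\overline{O}$ is an irreducible projective curve containing $O$ as a dense open subset, so $\partial O := \overline{O}\setminus O$ is a nonempty finite set (nonempty because $\BA^1$ is not complete, so $O \neq \overline{O}$).

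Next I would show every point of $\partial O$ is $\BG_a$-fixed. The closure $\overline{O}$ is $\BG_a$-stable, and $O$ is $\BG_a$-stable, so $\BG_a$ acts on the finite set $\partial O$; since $\BG_a$ is connected, this action is trivial, i.e. each point of $\partial O$ is fixed.

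The main step — and the part I expect to be the actual obstacle — is showing $\partial O$ is a single point. Here I would pass to the normalization $\nu\colon C \to \overline{O}$, a smooth projective curve; the $\BG_a$-action lifts to $C$, and $\nu^{-1}(O)$ is an open $\BG_a$-stable subset isomorphic (via $\nu$) to $O \cong \BA^1$, because $\nu$ is an isomorphism over the smooth locus and $O$, being a homogeneous space, is smooth. Thus $C \setminus \nu^{-1}(O)$ is a finite set of points on a smooth projective curve, and $C$ minus a finite set is isomorphic to $\BA^1$; but a smooth projective curve with $k$ points removed is $\BA^1$ only when $k = 1$ (for $k = 0$ it is projective; for $k \geq 2$ it has more than one end / nontrivial $H^1$ or simply is not $\BA^1$ — e.g. $\pic$ or the unit count of global functions distinguishes them). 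Hence $|C \setminus \nu^{-1}(O)| = 1$, and since $\nu$ maps this complement onto $\partial O$, we get $|\partial O| = 1$. Writing $\partial O = \{y\}$ and noting $y \notin O$ and $y$ is $\BG_a$-fixed by the previous paragraph completes the proof: $\overline{O} = O \cup \{y\}$. An alternative to the normalization argument, which avoids curve classification, is to invoke that a $\BG_a$-orbit closure in a complete variety, being a rational curve with a $\BG_a$-action having a dense orbit, has exactly one point at infinity — but I would prefer to make the $\BA^1$-with-points-removed dichotomy explicit since that is the crux.
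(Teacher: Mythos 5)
Your proof is correct and follows essentially the same route as the paper's: both pass to the normalization of $\overline{O}$, identify it with $\BP^1$ (the unique smooth completion of the orbit $O\cong\BA^1$), and conclude that the boundary is a single $\BG_a$-fixed point. The only cosmetic difference is that the paper gets the one-point count from the classification of $\BG_a$-actions on $\BP^1$, whereas you get it from the fact that $\BA^1$ sits in a smooth projective curve with exactly one point missing; these are interchangeable.
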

\begin{proof}
    Suppose that $\overline{O}$ is a normal variety. Since $O \subseteq \overline{O}$ and $O\simeq \mathbb{C}$, it follows that $\overline{O}$ is a complete normal rational curve. Therefore, $\overline{O}$ is isomorphic to $\BP^1.$ The only non-tivial action of $\BG_a$ on $\BP^1$ is given by the formula
    $$s\circ [z_0:z_1] = [z_0 : z_1 + sz_0], \ s\in \BG_a$$
    and $\BP^1$ is the union of the orbit $O$ and a fixed point. 

    If the variety $\overline{O}$ is not normal, one can consider the normalization $\pi: Z \to \overline{O}$. The action of $\BG_a$ lifts to an action on $Z$ that commutes with $\pi$. Then $Z$ is isomorphic to $\BP^1$ and consists of two $\mathbb{G}_a$-orbits, one of which is a fixed point. This implies that $\overline{O}$ consists of two $\BG_a$-orbits, one of which is a fixed point. 
\end{proof}

\begin{remark}
    In the notation of Lemma \ref{limdef}, we have $y = \lim_{s \to \infty} s\circ x.$ 

\end{remark}

\begin{corollary}\label{defcor}
    Let $X$ be a complete variety of dimension $n$, $\alpha$ an additive action of the group $\BG_a^n$ on $X$ and $O$ the open orbit. Let us fix a point $x_o \in O$. The following conditions are equivalent:

    \begin{enumerate}
        \item  the action $\alpha$ satisfies the \textbf{OP}-condition;

        \item for every orbit $O' \neq O$ there is a vector $v\in \BG_a^n$  such that $\lim_{t \to \infty} (tv\circ x_0) \in O';$

        \item for every point $y\in X\setminus O$ there is a point $x\in O$ and a vector $v\in \BG_a^n$ such that $\lim_{t\to \infty} tv\circ x = y.$
    \end{enumerate}

    Then $\alpha$ satisfies the \textbf{OP}-condition if and only if for every orbit $O' \neq O$ there is a vector $v\in \BG_a^n$  such that $\lim_{t \to \infty} (tv\circ x_0) \in O'.$
\end{corollary}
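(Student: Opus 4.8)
The statement to prove is Corollary \ref{defcor}: the equivalence of conditions (1), (2), (3) for an additive action $\alpha$ of $\BG_a^n$ on a complete variety $X$ of dimension $n$ with open orbit $O$. The plan is to establish the cycle of implications (3) $\Rightarrow$ (1) $\Rightarrow$ (2) $\Rightarrow$ (3), using Lemma \ref{limdef} to translate ``closure of an $H$-orbit meets $O'$'' into the existence of a genuine limit point, and using the fact that $\BG_a^n$ acts transitively on $O$ to move the base point freely.

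First I would record the dictionary between one-dimensional algebraic subgroups $H \subseteq \BG_a^n$ and nonzero vectors $v \in \BG_a^n$: every such $H$ is of the form $H_v = \{tv : t \in \BG_a\}$ for some $v \neq 0$, and the $H_v$-orbit of a point $x$ is the image of the orbit map $t \mapsto tv \circ x$. Since $X$ is complete and $x$ lies in the open orbit $O$ (so $x$ is not $\BG_a^n$-fixed, hence not $H_v$-fixed because $\alpha$ is effective and $O$ is $n$-dimensional while a fixed locus of $H_v$ cannot contain the open orbit), Lemma \ref{limdef} applies: the closure of the $H_v$-orbit of $x$ is that orbit together with a single fixed point, which by the Remark equals $\lim_{t \to \infty} tv \circ x$. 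Thus ``$\overline{H_v \cdot x} \cap O' \neq \varnothing$ and $H_v \cdot x \subseteq O$'' is equivalent to ``$\lim_{t \to \infty} tv \circ x \in O'$''. This observation makes (1) and (3) almost tautologically linked, and reduces (2) to the special case $x = x_0$.

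The implication (3) $\Rightarrow$ (1) is then immediate: given $O' \neq O$, pick any $y \in O'$; condition (3) furnishes $x \in O$ and $v$ with $\lim tv \circ x = y$, and by the dictionary above $\overline{H_v \cdot x}$ meets $O'$ while $H_v \cdot x \subseteq O$ (the orbit of $x$ under $H_v$ stays in $O$ as long as it is not the limit point, and a whole $1$-dimensional orbit cannot lie outside the open orbit's... — more carefully, $H_v\cdot x\setminus\{y'\}\subseteq O$ where $y'$ is the limit, and we only need $\overline{H_v\cdot x}\cap O'\neq\varnothing$, which holds since $y\in O'$), so $\alpha$ satisfies the \textbf{OP}-condition. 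For (1) $\Rightarrow$ (2): given $O' \neq O$, the \textbf{OP}-condition gives $x \in O$ and $v$ with $\lim_{t\to\infty} tv\circ x \in O'$; since $\BG_a^n$ acts transitively on $O$, write $x = w \circ x_0$ for some $w \in \BG_a^n$, and because $\BG_a^n$ is abelian, $tv \circ x = tv \circ (w \circ x_0) = w \circ (tv \circ x_0)$, so $\lim_{t\to\infty} tv \circ x_0 = w^{-1} \circ (\lim_{t\to\infty} tv\circ x)$ lies in $w^{-1} \circ O' = O'$ (as $O'$ is $\BG_a^n$-stable). Here I would note that one must check the limit on the right exists: it does, because $tv \circ x_0 = w^{-1}\circ(tv\circ x)$ and the limit of the latter exists by hypothesis and $w^{-1}\circ(-)$ is continuous on the complete variety $X$. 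Finally (2) $\Rightarrow$ (3): given $y \in X \setminus O$, let $O'$ be its orbit; condition (2) gives $v$ with $\lim_{t\to\infty} tv \circ x_0 =: y' \in O'$; since $y, y' \in O'$ there is $w \in \BG_a^n$ with $w \circ y' = y$, and then $w\circ (tv\circ x_0) = tv\circ(w\circ x_0)$ tends to $y$, so taking $x = w \circ x_0 \in O$ we get $\lim_{t\to\infty} tv\circ x = y$, which is (3).

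The only genuine subtlety — and the step I expect to need the most care — is the continuity/commutation argument used to transport a limit point from one base point in $O$ to another: one must be sure that applying a fixed group element $w$ commutes with taking the limit $t \to \infty$, which is where completeness of $X$ (so that the limit exists as an honest point) together with continuity of the $\BG_a^n$-action is essential, and where commutativity of $\BG_a^n$ is used to rewrite $tv \circ (w\circ x_0)$ as $w \circ (tv \circ x_0)$. Everything else is bookkeeping with Lemma \ref{limdef} and the transitivity of $\BG_a^n$ on its open orbit. The last sentence of the statement is just a restatement of (1) $\Leftrightarrow$ (2) and needs no separate argument.
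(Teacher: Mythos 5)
Your proposal is correct and follows essentially the same route as the paper: the same cycle of implications (just listed in a different order), with Lemma \ref{limdef} converting orbit closures of one-parameter subgroups into limit points, transitivity of $\BG_a^n$ on $O$ to change base points, and commutativity of the group to interchange translation with the limit $t\to\infty$. Your explicit remark about the continuity of $w\circ(-)$ justifying the exchange of limit and translation is a slightly more careful phrasing of the step the paper handles by writing $s\circ\overline{Hx}=\overline{H(s\circ x)}$, but the substance is identical.
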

\begin{proof}
$1) \implies 2)$ Suppose $\alpha$ satisfies the \textbf{OP}-condition. Then for an orbit $O' \neq O$ there is a point $x\in O$ and a one-dimensional subgroup $H$ such that the closure of $H$-orbit of $x$ intersects $O'$. Every one-dimensional algebraic subgroup in $\BG_a^n$ is a one-dimensional subspace, so $H = \langle v \rangle$ for some non-zero $v\in \BG_a^n$. By Proposition \ref{limdef}, we have $\lim_{t\to \infty} tv \circ x = y\in O'$. There is an element $s\in \BG_a^n$ such that $s\circ x = x_0$. Since the map $z \to s\circ z$ is an automorphism of $X$ we have 
$$s\circ \overline{Hx}  = \overline{s\circ Hx}= \overline{H (s\circ x)} = \overline{Hx_0}.$$
So $\lim_{t\to \infty} tv\circ x_0 \in O'.$

2) $ \implies $ 3) Let us consider a point $y \in X \setminus O$. Then $y$ belongs to some orbit $O' \neq O$. There is a vector $v\in \BG_a^n$ such that $\lim_{t\to \infty} tv\circ x_0 = y' \in O'$. Since $y'$ and $y$ lie in the same orbit, there is $s\in \BG_a^n$ such that $y = s\circ y'$. Then

$$\lim_{t\to \infty} (tv\circ (s\circ x_0)) = s\circ \lim_{t\to \infty} (tv\circ x_0) = s\circ y' = y.$$

3)$\implies $ 1) Let us choose an orbit $O' \neq O$. Consider a point $y\in O'$. There is a point $x \in O$ and a vector $v\in \BG_a^n$ such that $\lim_{t\to \infty} tv\circ x = y$. Then $v \neq 0$, and $H = \langle v \rangle$ is a one-dimensional subgroup in $\BG_a^n$. The closure of the $H$-orbit of $x$ in the classical topology contains the point $y$. Then the closure of $H$-orbit of $x$ in the Zariski topology contains $y$.

\end{proof}
 
Now let $A$ be a local algebra of dimension $n+1$ with the maximal ideal $\mf$, and $\alpha$ be the corresponding additive action on $\BP(A) = \BP^n$. For an element $m\in \mf$ by $\lim_{t\to \infty} tm$ we will mean $\lim_{t\to \infty} tm\circ[1].$

\begin{proposition}
    Let $m$ be an element of $\mf\setminus\{0\}$. Then $\lim_{t\to \infty} tm= [m^k],$ where $k$ is the largest positive integer number such that $m^k \neq 0$.
\end{proposition}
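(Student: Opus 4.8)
The plan is to write down the orbit map explicitly and rescale the representative so as to read off the limit. By definition, $tm\circ[1] = [\exp(tm)]$, and since $m$ is nilpotent with $m^{k+1}=0$ and $m^k\neq 0$, the exponential series truncates to a polynomial in $t$ with coefficients in $A$:
$$\exp(tm) = 1 + tm + \tfrac{t^2}{2}m^2 + \cdots + \tfrac{t^k}{k!}m^k.$$
First I would note that $[1]$ is not fixed by the one-dimensional subgroup $H = \langle m\rangle \cong \mathbb{G}_a$: if it were, $\exp(tm)$ would be a scalar multiple of $1$ for all $t$, forcing $m=0$. Hence by Lemma \ref{limdef} the orbit closure $\overline{H[1]}$ is a curve consisting of the orbit together with a single $H$-fixed point, and $\lim_{t\to\infty} tm$ is exactly that fixed point; it only remains to identify it.

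Next, since scaling a representative by a nonzero constant does not change the point of $\BP(A)$, I would replace $\exp(tm)$ by $t^{-k}\exp(tm)$ for $t\neq 0$:
$$tm\circ[1] = \bigl[\, t^{-k}\exp(tm)\,\bigr] = \left[\frac{m^k}{k!} + \frac{m^{k-1}}{(k-1)!\,t} + \cdots + \frac{1}{t^{k}}\right].$$
Letting $t\to\infty$, every summand carrying a negative power of $t$ tends to $0$ in $A$, so the bracketed vector converges in $A$ to $\tfrac{1}{k!}m^k$, which is nonzero precisely because $m^k\neq 0$. By continuity of the quotient map $A\setminus\{0\}\to\BP(A)$ the images converge as well, giving
$$\lim_{t\to\infty} tm = \Bigl[\tfrac{1}{k!}m^k\Bigr] = [m^k],$$
as claimed.

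There is no real obstacle here; the statement is essentially a direct computation. The only points requiring a word of care are that the classical limit of the rescaled representatives indeed descends to a limit in $\BP(A)$ (continuity of $A\setminus\{0\}\to\BP(A)$), and that the limit along the one-parameter subgroup in the sense used in Corollary \ref{defcor} coincides with this classical limit, which is automatic because $X$ is complete and the orbit closure is a rational curve. I would also record, for later use, that combined with Proposition \ref{morb} this identifies the orbit reached as a limit of a one-parameter subgroup through $[1]$ as $[m^kA^*]$, which is the form in which the \textbf{OP}-condition will be reformulated in terms of powers of elements of $\mf$.
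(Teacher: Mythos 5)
Your proof is correct and follows essentially the same route as the paper: expand $\exp(tm)$ as a polynomial in $t$, rescale the representative by $t^{-k}$, and let $t\to\infty$. The only cosmetic difference is that the paper completes $1, m, \ldots, m^k$ to a basis and computes in homogeneous coordinates, whereas you invoke continuity of $A\setminus\{0\}\to\BP(A)$ at the nonzero limit $\tfrac{1}{k!}m^k$; both are valid.
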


\begin{proof}
    We have
    $$\exp(tm) = 1 + \frac{tm}{1!} + \frac{t^2m^2}{2!} + \ldots + \frac{t^km^k}{k!}.$$
    
    Since $m$ is nilpotent the vectors $1, m, m^2, \ldots, m^k$ are linearly independent. Let us complete these vectors to a basis of $A$. Then in the corresponding homogeneous coordinates on $\BP^n$ we have
    $$\exp(tm)\circ[1] = [1: \frac{tm}{1!}: \frac{t^2m^2}{2!} : \ldots : \frac{t^km^k}{k!}: 0 : \ldots : 0] = $$
    $$[\frac{1}{t^k}: \frac{m}{t^{k-1}1!}: \frac{m^2}{t^{k-2}2!}: \ldots : \frac{m^k}{k!}: 0 : \ldots : 0] 
\underset{t \to \infty}{\longrightarrow} [0 : \ldots : \frac{m^k}{k!} : 0 : \ldots : 0] = [m^k].$$

\end{proof}

\begin{proposition}\label{pnop}
    Let $\alpha$ be an additive action of $\BG_a^n$ on $\BP^n$ that satisfies the \textbf{OP}-condition. Then $\alpha$ is equivalent to the following additive action:
    \begin{equation}\label{ad1}(s_1,\ldots, s_n)\circ [z_0:z_1:\ldots :z_n] = [z_0: z_1 + s_1z_0: \ldots : z_n + s_nz_0],
    \end{equation}
    where $(s_1,\ldots, s_n) \in \BG_a^n$ and $z_0,\ldots, z_n$ are homogeneous coordinates on $\BP^n$.
\end{proposition}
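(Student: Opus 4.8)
The plan is to translate the \textbf{OP}-condition, via Corollary \ref{defcor} and the preceding proposition, into an algebraic statement about the local algebra $A$ of dimension $n+1$, and then force $A$ to be the unique algebra giving the action (\ref{ad1}). First I would identify which algebra that is: the action (\ref{ad1}) corresponds to the local algebra $A_0 = \BC[x_1,\ldots,x_n]/(x_ix_j \mid 1\le i,j\le n)$, i.e. $A_0 = \BC \oplus W$ with $\mf = W$ an $n$-dimensional vector space satisfying $\mf^2 = 0$; indeed $\exp(m) = 1+m$ for $m\in\mf$ in this algebra, which reproduces (\ref{ad1}) in the basis $1,x_1,\ldots,x_n$. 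So the goal is to show that if $\alpha$ satisfies the \textbf{OP}-condition, then $\mf^2 = 0$, whence $A\cong A_0$ and $\alpha$ is equivalent to (\ref{ad1}) by the Hassett--Tschinkel bijection.

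Next I would use the preceding proposition, which computes $\lim_{t\to\infty} tm = [m^k]$ where $k = k(m)$ is the largest integer with $m^k\neq 0$. By Corollary \ref{defcor}(2), with $x_0 = [1]$, the \textbf{OP}-condition says that every orbit $O'\neq O$ is hit by some limit $\lim_{t\to\infty} tm = [m^{k(m)}]$ with $m\in\mf\setminus\{0\}$. Combining this with Proposition \ref{morb}, which says the orbit of $[a]$ is $[aA^*]$, the \textbf{OP}-condition becomes: for every $a\in A\setminus\{0\}$ there is $m\in\mf\setminus\{0\}$ with $m^{k(m)}\in aA^*$, i.e. $m^{k(m)}$ and $a$ generate the same principal ideal. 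So every principal ideal of $A$ other than $A$ itself is generated by an element of the form $m^{k(m)}$ for some $m\in\mf$. In particular the socle (minimal nonzero ideals) and, more importantly, the one-dimensional ideal $\mf^d$ where $d$ is the nilpotency index must be reached; but the key constraint is to look at $a\in\mf\setminus\mf^2$: its principal ideal $aA$ must equal $m^{k(m)}A$ for some $m$, and if $k(m)\ge 2$ then $m^{k(m)}\in\mf^2$, forcing $aA\subseteq\mf^2$, contradiction since $a\notin\mf^2$ and $aA\not\subseteq\mf^2$. Hence for every $a\in\mf\setminus\mf^2$ we must have $k(m)=1$ for the witnessing $m$, i.e. $aA = mA$ with $m^2 = 0$.

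From here I would argue that $\mf^2 = 0$. Suppose not. Pick a minimal set of generators $a_1,\ldots,a_n$ of $\mf$; each lies in $\mf\setminus\mf^2$ (by minimality, assuming $n\ge 1$ and $\mf\neq 0$), so by the previous step each $a_iA = m_iA$ with $m_i^2 = 0$; writing $a_i = u_i m_i$ with $u_i\in A^*$, and noting $m_i$ is again part of a minimal generating set, we may assume after replacing generators that $a_i^2 = 0$ for all $i$. It remains to kill the cross terms $a_ia_j$. For $i\neq j$ consider $a_i + a_j\in\mf\setminus\mf^2$ (it is a nonzero element not in $\mf^2$ since $a_i,a_j$ are part of a basis of $\mf/\mf^2$); its square is $2a_ia_j$, which must vanish by the same argument — more precisely, $(a_i+a_j)A = mA$ with $m^2=0$, so $(a_i+a_j)^2 \in mA\cdot m = m^2 A = 0$ gives $a_ia_j = 0$. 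Hence $\mf^2 = 0$, $A\cong A_0$, and by Hassett--Tschinkel $\alpha$ is equivalent to (\ref{ad1}).

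The main obstacle I anticipate is the bookkeeping in the step $a_iA = m_iA$ with $m_i^2 = 0$: one must be careful that after rescaling the generators $a_i$ to have square zero the set $\{a_i\}$ still minimally generates $\mf$ (this is automatic since $a_i$ and $m_i$ differ by a unit, hence have the same image in $\mf/\mf^2$), and one must handle the case analysis when some witnessing $m$ could a priori have $k(m)\ge 2$ — which is exactly ruled out by testing against $a\in\mf\setminus\mf^2$ as above. A secondary point to verify carefully is that the element $a_i+a_j$ (and more generally a general element of $\mf\setminus\mf^2$) genuinely lies outside $\mf^2$, which follows because $a_1,\ldots,a_n$ project to a basis of $\mf/\mf^2$; once this is in hand the cross-term vanishing and the conclusion are immediate.
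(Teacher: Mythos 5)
Your proposal is correct and follows essentially the same route as the paper: translate the \textbf{OP}-condition via the limit formula $\lim_{t\to\infty}tm=[m^k]$ and the orbit description $[aA^*]$ into the statement that every $a\in\mf\setminus\{0\}$ is a unit multiple of some $m^k$ with $m^{k+1}=0$, deduce that squares vanish, conclude $\mf^2=0$, and invoke the Hassett--Tschinkel correspondence. The only difference is cosmetic: the paper observes directly that $a = c^{-1}\lambda m^k$ gives $a^2 = 0$ for \emph{every} $a\in\mf$ (no need to restrict to $a\notin\mf^2$ or to force $k=1$) and then cites \cite[Lemma 2.13]{AZ} for the polarization step you carry out by hand; the paper also adds the (strictly speaking unneeded for this implication) verification that the resulting action does satisfy the \textbf{OP}-condition.
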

    \begin{proof}
       Let us choose an element $a\in \mf\setminus \{0\}.$ Then $\mf$-orbit of the point $[a]$ is the set $[aA^*]$. Suppose there is $m_0\in \mf$ such that $\lim_{t\to \infty} tm_0 = [ca]$ for some $c\in A^*$. Then $[ca] = [m_0^k]$ and $m_0^{k+1} = 0.$ It implies that $a^2 = 0.$ So $a^2 = 0$ for all $a\in \mf.$ By \cite[Lemma 2.13]{AZ} we have $\mf^2 = 0.$ Hence, $A$ is isomorphic to the algebra $\BC[x_1,\ldots, x_n]/(x_ix_j)$ and the corresponding additive action is given by (\ref{ad1}).  
        
        The complement to the open orbit in $\BP^n$ is the set of points of the form $[m]$ with $m\in \mf\setminus\{0\}.$ Since $m^2 = 0$, we have $[m] = \lim_{t\to \infty} tm$. So this additive action satisfies the \textbf{OP}-condition.   
        \end{proof}

\begin{remark}
    The additive action \ref{ad1} has one open orbit, and the complement consists of an infinite number of fixed points. We see that the only additive action on $\mathbb{P}^n$ satisfying the \textbf{OP}-condition is "opposite" to the only additive action with a finite number of orbits.
\end{remark}

\section{Additive actions on projective hypersurfaces}

Now we turn our attention to the case of hypersurfaces. We always assume that the hypersurface is not a hyperplane. Let $X$ be a projective hypersurface in $\mathbb{P}^n$ and  $\alpha : \mathbb{G}_a^{n-1} \times X \to X$ be an additive action on $X$. We call $\alpha$ an \textbf{induced additive action} if it can be extended to an action $ \mathbb{G}_a^{n-1} \times \mathbb{P}^n \to \BP^n$.

\begin{proposition}\cite[Proposition 3]{AP}\label{APprop}

There is a one-to-one correspondence between

\begin{enumerate}\label{prophyp}
    \item equivalence classes of induced additive actions on hypersurfaces in $\BP^n$ of degree at least 2; 
    \item isomorphism classes of pairs $(A, U)$, where $A$ is a local $(n+1)$-dimensional algebra with a maximal ideal $\mathfrak{m}$ and $U$ is a $(n-1)$-dimensional subspace in $\mathfrak{m}$ that generates $A $ as an algebra with a unit. 
\end{enumerate}
\end{proposition}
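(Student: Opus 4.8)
The plan is to establish the bijection by writing down explicit constructions in both directions and then checking that they are mutually inverse once one passes to equivalence (resp. isomorphism) classes.

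First I would go from a pair $(A,U)$ to an induced action. Given such a pair, with $\mf$ the maximal ideal of $A$ and $U\subseteq\mf$ an $(n-1)$-dimensional subspace generating $A$ as a unital algebra, I restrict the Hassett--Tschinkel action $m\circ[a]=[\exp(m)a]$ of $\mf$ on $\BP(A)\cong\BP^n$ to the subgroup $U\cong\BG_a^{n-1}$. This restriction is faithful because $\exp$ is injective on $\mf$ and $1$ is the only scalar in $1+\mf$. The orbit of $[1]$ is the image of the injective morphism $u\mapsto[\exp(u)]$, hence has dimension $n-1$; and since the linear span of $\{\exp(u):u\in U\}$ equals the unital subalgebra generated by $U$, which is all of $A$ by hypothesis, the orbit closure $X$ is not contained in a hyperplane. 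Thus $X$ is an irreducible $(n-1)$-dimensional closed subvariety of $\BP^n$ spanning $\BP^n$, i.e. a hypersurface of degree at least $2$; being $U$-stable it carries an induced additive action.

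Next I would go back. Starting from an induced action $\alpha\colon\BG_a^{n-1}\times X\to X$, I extend it to $\BP^n$ and linearize: since $\BG_a^{n-1}$ is unipotent, its image in $\mathrm{PGL}_{n+1}$ lifts to a faithful representation $\rho\colon\BG_a^{n-1}\to\GL(V)$, $V=\BC^{n+1}$, with unipotent image, say $\rho(u)=\exp(N(u))$ with $N$ a linear map into a space of commuting nilpotent operators. Fixing $v_0\in V$ over a point of the open orbit, I let $R\subseteq\mathrm{End}(V)$ be the commutative subalgebra generated by $1$ and the $N(u)$; it is local with maximal ideal $\mathfrak n$ the set of its nilpotent elements. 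The linear map $\phi\colon R\to V$, $r\mapsto rv_0$, has image the subspace $Rv_0$, which contains the affine cone over $X$; that cone is irreducible of dimension $n$ and cannot be a hyperplane (else $X$ would be a hyperplane), so $\phi$ is surjective, and $\ker\phi$ is an ideal contained in $\mathfrak n$ (here one uses that $\mathfrak n$ is nilpotent). Hence $A:=R/\ker\phi$ is local of dimension $n+1$, $\phi$ descends to an isomorphism $A\xrightarrow{\sim}V$ sending $\overline 1$ to $v_0$, and $U:=\{\overline{N(u)}\}\subseteq\mf$ is an $(n-1)$-dimensional generating subspace. Transporting the action through this isomorphism, $N(u)$ becomes multiplication by $\overline{N(u)}$, so the action on $\BP(A)$ is $u\circ[a]=[\exp(u)a]$; thus $\alpha$ corresponds to $(A,U)$. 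Running this construction on the action attached to a pair $(A,U)$ (take $V=A$, $v_0=1$) gives $R\cong A$, $\ker\phi=0$, and recovers $(A,U)$, while conversely $\phi$ identifies the hypersurface and action produced from $\alpha$ with the original ones up to a projective transformation. For the equivalence relations I would note that equivalent induced actions differ by a projective transformation intertwining the group actions; as $\BG_a^{n-1}$ has no nontrivial characters, the attached linearizations differ only by conjugation in $\GL(V)$ and by a choice of $v_0$ inside the open orbit (up to scalar), and conjugating $\rho$ by $g$ replaces $(R,v_0)$ by $(gRg^{-1},gv_0)$, hence $(A,U)$ by an isomorphic pair, whereas moving $v_0$ inside the open orbit or rescaling it leaves $\ker\phi$ unchanged; isomorphic pairs plainly give projectively equivalent hypersurfaces with intertwined actions.

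The hard part will be the linearization step and the input it requires: that the extension of $\alpha$ to $\BP^n$ is still faithful --- true because a hypersurface of degree at least $2$ is nondegenerate in $\BP^n$, and the identity is the only element of $\mathrm{PGL}_{n+1}$ fixing a nondegenerate subvariety pointwise --- and that a unipotent group action on $\BP^n$ lifts to a linear one with unipotent image, which amounts to the preimage of the image in $\GL_{n+1}$ splitting as $\BG_m$ times its unipotent radical. Once the action is linearized, constructing $A$ and $U$ is essentially formal; the dimension counts, the locality of $A$ (via Nakayama for the nilpotent ideal), and the tracking of equivalences are then routine.
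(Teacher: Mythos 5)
The paper gives no proof of this proposition — it is quoted from \cite{AP} — and only records the forward construction from an $H$-pair to a hypersurface; your argument correctly reconstructs both directions along the standard Hassett--Tschinkel/Arzhantsev--Popovskiy lines (restriction of the $\mathfrak{m}$-action to $U$ one way; linearization, the algebra $R$ generated by the commuting nilpotents $N(u)$, and the evaluation map $\phi\colon r\mapsto rv_0$ the other way), and the forward direction agrees with what the paper describes. The only nitpick is your justification that $\ker\phi\subseteq\mathfrak{n}$: nilpotence of $\mathfrak{n}$ is not what is needed — it suffices that $\ker\phi$ is a proper ideal of the local ring $R$ (it does not contain $1$, since $v_0\neq 0$), hence lies in the unique maximal ideal $\mathfrak{n}$.
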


Pairs $(A,U)$ as in Proposition \ref{prophyp} we call \textbf{$H$-pairs}. This correspondence is arranged as follows. If $(A,U)$ is an $H$-pair, then the additive group of $U$ acts on $\BP(A) = \BP^n$:
$$ u\circ [a] = [\exp(u)a],\ u \in U,\ a\in A.$$
We take $X$ to be the closure of the orbit of $[1]$. Then the action of $U$ preserves $X$ and defines an additive action on it. By an $H$-pair $(A,U)$, one can determine the degree and the equation of $X$.

\begin{theorem}\cite[Theorem 5.1]{AS}\label{degree} The degree of $X$ is equal to the largest number $d$ such that, $\mf^d \nsubseteq U$. 
    
\end{theorem}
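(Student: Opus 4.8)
The plan is to write down the defining equation of $X$ explicitly on the standard affine chart and read off its degree. Since $\dim U = n-1$ and $\dim \mf = n$, I would fix a vector space decomposition $\mf = U \oplus \BC w$ and let $\ell \in A^*$ be the linear functional with $\ell|_{U} = 0$, $\ell(1) = 0$ and $\ell(w) = 1$, so that $\ker \ell \cap \mf = U$. On the chart $\{z_0 \neq 0\}$, where $z_0$ is the coordinate dual to $1$, a point has the form $1 + m$ with $m \in \mf$, and it lies in the open orbit $[\exp(U)]$ precisely when $\ln(1+m) \in U$, i.e.\ when $\ell(\ln(1+m)) = 0$. Thus the open orbit, and hence $X$, is cut out on this chart by
\[ \bar F(m) := \ell(\ln(1+m)) = \sum_{i \ge 1} \frac{(-1)^{i-1}}{i}\,\ell(m^i) = 0. \]

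First I would observe that this is a polynomial of degree at most $d$. Indeed $m^i \in \mf^i$, and since $d$ is the largest integer with $\mf^d \not\subseteq U$ we have $\mf^{i} \subseteq \mf^{d+1} \subseteq U = \ker \ell \cap \mf$ for every $i \ge d+1$; hence $\ell(m^i) = 0$ for $i > d$ and the series truncates to $\bar F(m) = \sum_{i=1}^{d} \tfrac{(-1)^{i-1}}{i}\ell(m^i)$. Homogenising with respect to $z_0$ produces a form $F$ of degree at most $d$ vanishing on $X$, so $\deg X \le d$.

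The main point is the matching lower bound, for which I would show that $\bar F$ has degree exactly $d$ and is reduced. Its top-degree part is $\tfrac{(-1)^{d-1}}{d}\ell(m^d)$, and I claim $\ell(m^d)$ is not identically zero: otherwise the symmetric $d$-linear form $(m_1,\dots,m_d)\mapsto \ell(m_1\cdots m_d)$ would vanish on the diagonal, hence (by polarisation, as $\mathrm{char}\,\BC = 0$) vanish identically, giving $\ell(\mf^d)=0$, i.e.\ $\mf^d \subseteq U$, contrary to the choice of $d$. So $\deg \bar F = d$. To see that $\bar F$ is reduced, note that its linear part is the nonzero linear form $\ell(m)$, so $[1]$ (the point $m=0$) is a smooth point of $V(\bar F)$; since $X$ is irreducible (being an orbit closure) and coincides with $V(\bar F)$ on the chart, the polynomial $\bar F$ can only be a scalar multiple of an irreducible polynomial. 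Therefore $\deg X = \deg \bar F = d$.

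The only delicate step is the lower bound $\deg X \ge d$: a priori the form $F$ could be a proper power of the true equation of $X$, which would make $\deg X$ strictly smaller. This is exactly what the two ingredients above rule out, namely the nonvanishing of the leading coefficient $\ell(m^d)$ (via polarisation together with $\mf^d \not\subseteq U$) and the smoothness of $X$ at $[1]$. Everything else is the bookkeeping of truncating $\ln(1+m)$ against the filtration $\mf \supseteq \mf^2 \supseteq \cdots$, using the hypotheses $\mf^{d+1} \subseteq U$ and $\mf^d \not\subseteq U$.
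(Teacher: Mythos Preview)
Your argument is correct. Note, however, that the paper does not supply its own proof of this statement: it is quoted from \cite[Theorem~5.1]{AS} without proof, and likewise the explicit equation of $X$ (Theorem~\ref{equation}) is quoted from \cite{AZ}. What you have written is essentially the standard derivation behind both cited results: you produce the affine equation $\ell(\ln(1+m))=0$, which is exactly the dehomogenisation of the formula in Theorem~\ref{equation}, and then read off its degree using polarisation (to see $\ell(m^d)\not\equiv 0$) and the nonvanishing linear term $\ell(m)$ (to see the equation is reduced). So there is nothing to compare against in this paper, but your proof is sound and matches the approach of the original references.
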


\begin{theorem}\cite[Theorem 2.14]{AZ}\label{equation}
    The hypersurface $X$ is given in $\BP(A)$ by the following homogeneous equation:
    $$z_0^d\pi\left(\ln\left(1 + \frac{z}{z_0} \right)\right) = 0,$$
    where $z_0\in \BK, z\in \mf$ and $\pi:\mf \to \mf/U\simeq \BK$ is the canonical projection.
\end{theorem}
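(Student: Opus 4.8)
The plan is to show directly that the right-hand side is a well-defined nonzero homogeneous polynomial of degree $d$ and that its zero set is exactly $X$, treating the affine chart $\{z_0\neq 0\}$ and the hyperplane $\{z_0=0\}$ separately. Write $A=\BK\cdot 1\oplus\mf$, so that $[z_0\cdot 1+z]$ with $z_0\in\BK$, $z\in\mf$, are homogeneous coordinates on $\BP(A)=\BP^n$, and set
$$F(z_0,z):=z_0^{\,d}\,\pi\!\left(\ln\!\left(1+\tfrac{z}{z_0}\right)\right).$$

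First I would check that $F$ is a nonzero homogeneous polynomial of degree $d$ that is not divisible by $z_0$. Since every $z\in\mf$ is nilpotent, $\ln(1+z/z_0)=\sum_{i\ge 1}\frac{(-1)^{i-1}}{i}z_0^{-i}z^i$ is a finite sum; by Theorem \ref{degree} we have $\mf^{d+1}\subseteq U=\Ker\pi$, so $\pi(z^i)=0$ for $i\ge d+1$ and hence
$$F=\sum_{i=1}^{d}\frac{(-1)^{i-1}}{i}\,z_0^{\,d-i}\,\pi(z^i),$$
where $z\mapsto\pi(z^i)$ is a homogeneous polynomial function of degree $i$ on $\mf$; thus $F$ is homogeneous of degree $d$. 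The part of $F$ not involving $z_0$ is $\frac{(-1)^{d-1}}{d}\pi(z^d)$, and this is a nonzero polynomial function on $\mf$: as $A$ is commutative the polarisation of $z\mapsto z^d$ is $(z_1,\dots,z_d)\mapsto z_1\cdots z_d$, so the polarisation of $z\mapsto\pi(z^d)$ sends $(m_1,\dots,m_d)$ to $\pi(m_1\cdots m_d)$, which is nonzero for suitable $m_i$ because $\mf^d\nsubseteq U$ (Theorem \ref{degree}). Therefore $F\neq 0$ and $z_0\nmid F$.

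Next I would compute the zero set of $F$ on the chart $U_0:=\{z_0\neq 0\}\simeq\BA^n$. Consider $\phi\colon U_0\to\mf$, $\phi([z_0\cdot 1+z])=\ln(1+z/z_0)$; it is well defined (independent of the scaling of the representative) and is an isomorphism of varieties with inverse $m\mapsto[\exp(m)]$, both maps being polynomial because $\mf$ is nilpotent and $\exp,\ln$ are mutually inverse on $1+\mf$. The $U$-orbit of $[1]$ equals $O=\{[\exp(u)]:u\in U\}\subseteq U_0$, and $\phi$ maps it bijectively onto the linear subspace $U$, since $\phi([\exp(u)])=\ln(\exp(u))=u$. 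Because $F/z_0^{\,d}=\pi\circ\phi$ on $U_0$, it follows that $\{F=0\}\cap U_0=\phi^{-1}(\Ker\pi)=\phi^{-1}(U)=O$. Now by construction $X=\overline{O}$, so $X=\overline{\{F=0\}\cap U_0}\subseteq\{F=0\}$; conversely $\{F=0\}$ is a hypersurface, each of whose irreducible components has dimension $n-1$, and since $z_0\nmid F$ none of them lies in $\{z_0=0\}$, so each component meets $U_0$ in a dense open subset and is therefore contained in $\overline{\{F=0\}\cap U_0}=X$. Hence $\{F=0\}=X$.

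The only genuinely delicate point is the claim $z_0\nmid F$ in the first step: it says that the zero locus of $F$ does not acquire spurious components inside the hyperplane at infinity, and it is exactly here that $\mf^d\nsubseteq U$ (Theorem \ref{degree}) is needed, via the polarisation identity above. Everything else is bookkeeping with $\exp$ and $\ln$ on the nilpotent ideal $\mf$. (If one additionally wants $F$ to generate the ideal of $X$, one should check that $F$ is square-free; this is consistent with $\deg X=d$ from Theorem \ref{degree}, but is not required for the set-theoretic statement.)
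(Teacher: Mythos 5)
Your argument is correct. Note that the paper does not prove this statement at all — it is quoted from \cite[Theorem 2.14]{AZ} — so there is no in-paper proof to compare against; your write-up is a valid self-contained substitute, and it follows the standard route (truncate $\ln$ using $\mf^{d+1}\subseteq U$, identify $\{F=0\}\cap\{z_0\neq 0\}$ with the open orbit via the $\exp$/$\ln$ isomorphism, then pass to closures). You correctly isolate the one delicate point, namely that $z_0\nmid F$, and your polarisation argument for $\pi(z^d)\not\equiv 0$ from $\mf^d\nsubseteq U$ (valid in characteristic $0$, with commutativity guaranteeing that $(m_1,\dots,m_d)\mapsto\pi(m_1\cdots m_d)$ is the polarisation) settles it; the final step, that an irreducible component of $\{F=0\}$ contained in $\{z_0=0\}$ would force $z_0\mid F$ by irreducibility of $z_0$, is also sound.
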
    

It is also possible to determine which points are contained in $X$.

\begin{proposition}\cite[Corollary 2.18]{AZ}\label{complement}
The complement of the open $U$-orbit in $X$ is the set
$$\{ \, [a]  \mid a \in \mf \text{ such that } a^d \in U \, \},$$
where $d$ is the degree of $X$.
\end{proposition}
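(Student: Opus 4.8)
The plan is to read the complement off the explicit equation of $X$ furnished by Theorem \ref{equation}, using Theorem \ref{degree} to control its top-degree part. Write an element of $A$ as $a = z_0\cdot 1 + z$ according to the decomposition $A = \BK\cdot 1 \oplus \mf$, so that $(z_0, z)$ are the homogeneous coordinates on $\BP(A) = \BP^n$ appearing in Theorem \ref{equation}; let $d$ be the degree of $X$. By the construction following Proposition \ref{APprop}, the open $U$-orbit $O$ is the orbit of $[1]$, namely $O = \{[\exp(u)] \mid u \in U\}$, and since $\exp(u) \in 1 + \mf$ every point of $O$ lies in the chart $\{z_0 \neq 0\}$.

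First I would verify that $X \cap \{z_0 \neq 0\} = O$. A point with $z_0 \neq 0$ has a unique representative $1 + z$ with $z \in \mf$, and by Theorem \ref{equation} it belongs to $X$ if and only if $\pi(\ln(1 + z)) = 0$, i.e. $u := \ln(1+z) \in U$; but then $1 + z = \exp(u) \in \exp(U)$, so $[1 + z] \in O$. The reverse inclusion $O \subseteq X$ holds because $X = \overline{O}$. Hence
$$X \setminus O = X \cap \{z_0 = 0\} = \{\, [a] \mid a \in \mf \setminus \{0\},\ F(a) = 0 \,\},$$
where $F$ denotes the degree-$d$ homogeneous polynomial of Theorem \ref{equation}.

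It remains to evaluate $F$ on $\{z_0 = 0\}$. By Theorem \ref{degree} we have $\mf^{d+1} \subseteq U$, so $\pi$ annihilates $\mf^{d+1}$. Multiplying the logarithmic series by $z_0^d$ gives
$$z_0^d\, \pi\!\left( \ln\!\left( 1 + \frac{z}{z_0} \right) \right) = \sum_{i \geq 1} c_i\, z_0^{\,d-i}\, \pi(z^i) = \sum_{i=1}^{d} c_i\, z_0^{\,d-i}\, \pi(z^i),$$
where $c_i \in \BC \setminus \{0\}$ are the coefficients of $\ln(1+t)$ and the terms with $i > d$ vanish since $z^i \in \mf^i \subseteq \mf^{d+1} \subseteq U$; in particular this re-confirms that the left-hand side is a genuine homogeneous polynomial of degree $d$. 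Setting $z_0 = 0$ kills every term with $i < d$, leaving $F|_{\{z_0 = 0\}} = c_d\, \pi(z^d)$. Therefore, for $a \in \mf \setminus \{0\}$, we have $[a] \in X$ if and only if $\pi(a^d) = 0$, equivalently $a^d \in U$; together with the previous paragraph this gives $X \setminus O = \{\, [a] \mid a \in \mf \setminus \{0\},\ a^d \in U \,\}$, which is the asserted set.

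The one point that needs care is this last computation — specifically, that the tail of the logarithmic series genuinely disappears after applying $\pi$ (this is exactly where the inclusion $\mf^{d+1} \subseteq U$ from Theorem \ref{degree} enters) and that the surviving coefficient $c_d$ is non-zero in $\BC$. Both are immediate, so beyond invoking Theorems \ref{degree} and \ref{equation} the argument is pure bookkeeping; there is no substantial obstacle.
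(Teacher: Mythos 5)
Your proof is correct: the identification $X \cap \{z_0 \neq 0\} = O$ via $\ln$/$\exp$, and the observation that $\mf^{d+1} \subseteq U$ (Theorem \ref{degree}) reduces $F|_{\{z_0=0\}}$ to the nonzero multiple $c_d\,\pi(z^d)$ of $\pi(z^d)$, together give exactly the asserted description of $X \setminus O$. The paper itself states this proposition without proof, citing \cite[Corollary 2.18]{AZ}, and your derivation from Theorems \ref{degree} and \ref{equation} is the standard one.
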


Among all hypersurfaces, non-degenerate hypersurfaces are of particular interest.

\begin{definition}\label{nond}
    Suppose a projective hypersurface $X \subseteq \BP^n$ is given by an equation $f(z_0, z_1,\ldots,z_n) = 0$. Then $X$ is called \emph{non-degenerate} if  there is no linear transformation of variables $z_0,\ldots, z_n$ that reduces the number of variables in $f$ to less than $n+1$.
\end{definition}

Given an $H$-pair $(A, U)$, one can determine when the corresponding hypersurface is non-degenerate. Recall that the \textbf{socle} of a local algebra $A$ is the ideal 
$$\Soc(A) = \{a\in A \mid a\mf = 0\}.$$

\begin{theorem}\cite[Theorem 2.30]{AZ}\label{Soc}
    An $H$-pair $(A,U)$ defines a non-degenerate hypersurface if and only if~$\dim(\Soc(A)) = 1$ and $\mf = U \oplus \Soc(A)$.
\end{theorem}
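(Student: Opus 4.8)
\emph{Plan.} I would translate non-degeneracy of $X$ into the condition that $U$ contains no nonzero ideal of $A$, using the explicit equation of $X$ from Theorem~\ref{equation}, and then identify that condition with ``$\dim\Soc(A)=1$ and $\mf=U\oplus\Soc(A)$'' by a dimension count. Let $f$ be the defining form of $X$, a nonzero homogeneous polynomial of degree $d\geq2$ on the vector space $A$ (a point of $\BP^n=\BP(A)$ being written $[z_0\cdot 1+z]$, $z_0\in\BC$, $z\in\mf$). For $w\in A$ let $\partial_wf$ be the directional derivative. An invertible linear change of coordinates can make $f$ involve fewer than $\dim A$ variables exactly when some nonzero $w$ satisfies $\partial_wf\equiv0$ (in suitable coordinates $w$ is a coordinate direction, and $\partial_wf\equiv0$ says $f$ omits that variable); so $X$ is non-degenerate if and only if $N:=\{w\in A:\partial_wf\equiv0\}=0$.

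\emph{Computing $N$.} On the dense open set $A^*=A\setminus\mf$ every element is uniquely $a=z_0\exp(b)$ with $z_0\in\BC^*$ and $b=\ln(a/z_0)\in\mf$, and Theorem~\ref{equation} reads $f(z_0\exp(b))=z_0^d\pi(b)$. Write $w=w_0\cdot 1+w'$, $w_0\in\BC$, $w'\in\mf$, and parametrize $a+tw=z_0(t)\exp(b(t))$ with $z_0(t)=z_0+tw_0$. Differentiating $\exp(b(t))=(a+tw)/z_0(t)$ at $t=0$ gives $b'(0)=z_0^{-1}(w\exp(-b)-w_0\cdot 1)\in\mf$, hence
\[
\partial_wf\bigl(z_0\exp(b)\bigr)=z_0^{d-1}\Bigl(d\,w_0\,\pi(b)+\hat\pi\bigl(w\exp(-b)\bigr)\Bigr),
\]
where $\hat\pi\colon A\to\BC$ is the linear extension of $\pi$ with $\hat\pi(1)=0$. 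As $\partial_wf$ is a polynomial and $A^*$ is dense, $\partial_wf\equiv0$ iff the bracket vanishes for all $z_0\in\BC^*$, $b\in\mf$; substituting $tb$ for $b$ and expanding $\exp(-tb)=\sum_{i\geq 0}\frac{(-t)^i}{i!}b^i$, this becomes the system $\hat\pi(w)=0$, $\hat\pi(wb)=d\,w_0\,\pi(b)$, $\hat\pi(wb^i)=0$ ($i\geq2$), for all $b\in\mf$. The first gives $\pi(w')=0$, i.e. $w'\in U$; substituting $b^i$ for $b$ in the second and comparing with the third gives $d\,w_0\,\pi(b^i)=0$, and for $i=2$, since $d\geq2$ forces $\mf^2\not\subseteq U$ by Theorem~\ref{degree} and hence (by polarization) $\pi(b^2)\neq0$ for some $b$, we get $w_0=0$; then the second relation reads $\pi(w'\mf)=0$, i.e. $w'\mf\subseteq U$, so $wA=\BC w'+w'\mf\subseteq U$. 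Conversely, $wA\subseteq U$ implies $w\in wA\subseteq U\subseteq\mf$, so $w_0=0$ and $w\exp(-b)\in wA\subseteq U=\ker\pi$, whence $\partial_wf\equiv0$ by the formula. Thus $N$ equals the largest ideal of $A$ contained in $U$, and $X$ is non-degenerate if and only if $U$ contains no nonzero ideal of $A$.

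\emph{From ideals to the socle.} A nonzero $s\in\Soc(A)\cap U$ spans a nonzero ideal $\BC s=sA\subseteq U$; conversely, for a nonzero ideal $I\subseteq U$, choosing $k\geq0$ maximal with $\mf^kI\neq0$ (possible since $\mf$ is nilpotent) yields $0\neq\mf^kI\subseteq\Soc(A)\cap I\subseteq\Soc(A)\cap U$. So non-degeneracy of $X$ is equivalent to $\Soc(A)\cap U=0$. Now $\Soc(A)$ is a nonzero subspace of $\mf$ and $\dim U=\dim\mf-1$, so $\Soc(A)\cap U=0$ forces $\dim\Soc(A)=1$ and then $\dim(U\oplus\Soc(A))=\dim\mf$, i.e. $\mf=U\oplus\Soc(A)$; the converse is immediate since $\mf=U\oplus\Soc(A)$ already gives $\Soc(A)\cap U=0$.

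\emph{Main obstacle.} The technical core is the computation of $\partial_wf$: setting up the coordinates $a=z_0\exp(b)$ on $A^*$, differentiating $f$ through $\exp$ and $\ln$ on the nilpotent part to reach the displayed formula, and then extracting — via the rescaling $b\mapsto tb$ and polarization — exactly the two facts $w_0=0$ and $wA\subseteq U$. The essential-variables criterion and the final ideal/socle bookkeeping are routine.
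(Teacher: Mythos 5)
The paper does not prove this statement: it is imported verbatim from \cite[Theorem 2.30]{AZ}, and the only hint of the intended mechanism appears later, where the author recalls that for a degenerate hypersurface $W=U\cap\Soc(A)\neq0$ is an ideal and $(A/W,U/W)$ yields the same equation in fewer variables. Your argument is a correct, self-contained alternative. The chain non-degenerate $\Leftrightarrow$ $N=0$ $\Leftrightarrow$ $U$ contains no nonzero ideal $\Leftrightarrow$ $U\cap\Soc(A)=0$ $\Leftrightarrow$ ($\dim\Soc(A)=1$ and $\mf=U\oplus\Soc(A)$) is sound: the essential-variables criterion is the standard characteristic-zero fact; the formula $\partial_wf(z_0\exp(b))=z_0^{d-1}\bigl(d\,w_0\pi(b)+\hat\pi(w\exp(-b))\bigr)$ checks out (using $\tfrac{d}{dt}\exp(b(t))=\exp(b(t))b'(t)$, valid by commutativity); the extraction of $w_0=0$ uses that $\mf^2\nsubseteq U$ (from $d\geq2$ and Theorem~\ref{degree}) together with polarization, which is exactly right; and the passage from ideals in $U$ to $\Soc(A)\cap U$ via the last nonzero $\mf^kI$, plus the codimension-one dimension count, is routine and correct. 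Compared with the quotient-by-$W$ route suggested by \cite{AZ}, which gives the ``degenerate $\Rightarrow U\cap\Soc(A)\neq0$ fails'' direction by exhibiting an explicit reduction of variables, your computation identifies the full kernel $N$ in one pass and so delivers both implications simultaneously; the price is the somewhat delicate differentiation through the $(z_0,b)$-parametrization of $A^*$, which you have carried out correctly.
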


\begin{corollary}
Let $(A,U)$ be an $H$-pair and $X$ be the corresponding hypersurface of degree $d$. Suppose that $X$ is non-degenerate. Then for $m\in \mf$ we have $[m] \in X$ if and only if~$m^d = 0.$     
\end{corollary}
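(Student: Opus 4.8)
The plan is to combine Proposition~\ref{complement}, which describes the complement of the open $U$-orbit in $X$ as the set $\{[a] \mid a\in\mf,\ a^d\in U\}$, with the structural consequence of non-degeneracy recorded in Theorem~\ref{Soc}: $\dim\Soc(A)=1$ and $\mf = U\oplus\Soc(A)$. The only substantive point is to show that under non-degeneracy one has $\mf^d = \Soc(A)$; once this is known, $\mf^d\cap U = 0$, and the equivalence $m^d\in U \iff m^d = 0$ is immediate.

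First I would check that a point $[m]$ with $m\in\mf\setminus\{0\}$ never lies in the open $U$-orbit. That orbit is the orbit of $[1]$, consisting of the points $[\exp(u)]$ with $u\in U$; since $\exp(u)\in 1+\mf\subseteq A^*$, the open orbit is contained in $\{[a]\mid a\in A^*\}$, while $m\notin A^*$. Hence, if $[m]\in X$ then $[m]$ lies in the complement of the open orbit in $X$, so by Proposition~\ref{complement} we get $m^d\in U$; conversely, if $m^d=0$ then $m^d=0\in U$, and Proposition~\ref{complement} places $[m]$ in $X$. Thus the corollary reduces to the claim that, for $m\in\mf$, one has $m^d\in U$ if and only if $m^d=0$.

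To prove this, I would compute $\mf^d$. Let $c$ be the nilpotency index of $\mf$, so $\mf^c\neq 0$ and $\mf^{c+1}=0$. From $\mf^c\mf = 0$ we get $\mf^c\subseteq\Soc(A)$, and since $\dim\Soc(A)=1$ and $\mf^c\neq 0$ this forces $\mf^c=\Soc(A)$. By Theorem~\ref{degree}, $d$ is the largest integer with $\mf^d\nsubseteq U$; since $\mf^k=0\subseteq U$ for $k>c$ we have $d\le c$, and since $\mf^c=\Soc(A)$ meets $U$ only in $0$ (by $\mf = U\oplus\Soc(A)$) we have $\mf^c\nsubseteq U$, hence $d\ge c$. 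Therefore $d=c$ and $\mf^d=\Soc(A)$. Now for any $m\in\mf$ we have $m^d\in\mf^d=\Soc(A)$, so $m^d\in U$ forces $m^d\in U\cap\Soc(A)=\{0\}$, i.e. $m^d=0$; the reverse implication is trivial. Combined with the previous paragraph this gives $[m]\in X\iff m^d=0$.

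I do not expect a genuine obstacle here: the argument is a short bookkeeping with powers of $\mf$. The one step that needs care is the identification $\mf^d=\Soc(A)$, i.e. verifying that the degree $d$ coincides with the nilpotency index of $\mf$ precisely because of the non-degeneracy splitting, together with the (easy but necessary) observation that points $[m]$ with $m\in\mf$ automatically lie outside the open orbit, so that Proposition~\ref{complement} is applicable.
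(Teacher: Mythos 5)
Your proof is correct and follows essentially the same route as the paper: apply Proposition~\ref{complement} and then identify the top nonzero power of $\mf$ with $\Soc(A)$, so that the splitting $\mf = U\oplus\Soc(A)$ gives $m^d\in U\iff m^d=0$. You fill in two details the paper leaves implicit (that $[m]$ with $m\in\mf$ lies outside the open orbit, and that $d$ equals the nilpotency index of $\mf$), but the argument is the same.
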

\begin{proof}
    If $m^d = 0$, then $m^d \in U$ and $[m] \in X$ by Proposition \ref{complement}. Now suppose that $[m]\in X$. Then $m^d \in U$. Consider the sequence of ideals
    $$\mf \supsetneq \mf^2 \supsetneq \ldots \supsetneq \mf^l \supsetneq \mf^{l+1} = 0.$$
    Then $\mf^l \subseteq \Soc(A)$. Since $\dim \Soc(A) =1$, we have $\mf^l = \Soc(A)$. Then $d = l$ and $m^d \in U$ if and only if $m^d = 0$.
\end{proof}

For non-degenerate hypersurfaces, it is not difficult to describe the orbit of a given point.

\begin{corollary}
    Let $(A,U)$ be an $H$-pair and $X$ be the corresponding hypersurface in $\BP(A)$. Suppose that $X$ is non-degenerate. Then the $U$-orbit of a point $x = [m]\in X, m\in \mf$ is the set $[mA^*]$.
\end{corollary}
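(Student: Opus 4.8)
The plan is to reduce the description of the $U$-orbit of $[m]$ to the known description of the $\mf$-orbit of $[m]$ under the full additive action on $\BP(A)$ given in Proposition \ref{morb}. Recall that the $\mf$-orbit of $[m]$ is $[mA^*]$, and we must show that the smaller group $U\subseteq\mf$ already sweeps out all of $[mA^*]$ when we restrict to $X$. First I would observe that $[mA^*]$ lies in $X$: indeed, since $[m]\in X$ we have $m^d=0$ by the previous corollary, and then $(mc)^d=m^dc^d=0$ for every $c\in A^*$, so $[mc]\in X$ as well. Hence $[mA^*]$ is a subset of $X$ that is a union of $U$-orbits (because $U$-orbits are contained in $\mf$-orbits), and it is irreducible of dimension equal to $\dim U = n-1 = \dim X$ since it is the image of the $\mf$-orbit of $[m]$... actually more carefully, $[mA^*]$ has dimension $\dim(mA^*)-1$, and I would want this to be $n-1$.

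The key point will be a dimension count together with irreducibility. Here the decomposition $\mf = U\oplus\Soc(A)$ from Theorem \ref{Soc} does the work: writing an arbitrary $c\in A^* = 1+\mf$ as $c = 1 + u + s$ with $u\in U$, $s\in\Soc(A)$, I would show that $[mc] = [m\exp(u)]$ for a suitable modification, i.e. that multiplying $m$ by $\exp(u)$ already reaches the same point of $\BP(A)$ as multiplying by $c$, after possibly absorbing the socle part. The cleanest route is: the $U$-orbit of $[m]$ is $[m\exp(U)] = [m(1+U+\text{higher order terms})]$; I want to show $[m\exp(U)] = [mA^*]$. Since $\exp(\mf)=1+\mf=A^*$ (shown in Section 2), it suffices to show that for every $n'\in\mf$ there is $u\in U$ with $[m\exp(n')] = [m\exp(u)]$, equivalently $m\exp(n')\in \BC^*\cdot m\exp(u)$, equivalently $m\exp(n'-u)\in\BC^* m$ modulo the fact that $\exp$ is a homomorphism on the commutative nilpotent algebra so $\exp(n')\exp(u)^{-1}=\exp(n'-u)$. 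So the task reduces to: for every $n'\in\mf$ there is $u\in U$ with $m\exp(n'-u)\in\BC^* m$, i.e. $m\exp(w)\in\BC^* m$ where $w = n'-u$ ranges over $n'+U$; since $\mf = U\oplus\Soc(A)$ we can choose $u$ so that $w\in\Soc(A)$, and then $\exp(w) = 1+w$ with $w\in\Soc(A)$, so $m\exp(w) = m + mw = m$ because $mw\in m\Soc(A)\subseteq\mf\cdot\Soc(A) = 0$. This gives $[m\exp(n')] = [m\exp(u)]$, hence $[mA^*]\subseteq[m\exp(U)] = U\text{-orbit of }[m]$, and the reverse inclusion is immediate since $U\subseteq\mf$.

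Putting it together: the $U$-orbit of $[m]$ equals $[m\exp(U)]$, which by the argument above equals $[mA^*]$, which is exactly the statement. I would write this up in the order (i) $[mA^*]\subseteq X$ via $m^d=0$; (ii) the $U$-orbit is contained in the $\mf$-orbit $[mA^*]$; (iii) the reverse inclusion using $\mf = U\oplus\Soc(A)$ and $\mf\cdot\Soc(A)=0$ as above. The only mild subtlety — and the step I would be most careful about — is the manipulation $\exp(n')\exp(-u) = \exp(n'-u)$ and the reduction of $w$ into $\Soc(A)$ modulo $U$; both are straightforward given commutativity and nilpotency, but one should make sure the chosen $u\in U$ genuinely lands $n'-u$ in $\Soc(A)$, which is precisely guaranteed by the direct-sum decomposition $\mf = U\oplus\Soc(A)$ valid for non-degenerate hypersurfaces by Theorem \ref{Soc}. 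No serious obstacle is expected; the content is entirely in invoking non-degeneracy at the right moment.
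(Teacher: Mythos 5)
Your argument is correct and is essentially the paper's own proof: reduce to the $\mf$-orbit $[mA^*]$ via Proposition \ref{morb}, then use the decomposition $\mf = U \oplus \Soc(A)$ from Theorem \ref{Soc} together with $\mf\cdot\Soc(A)=0$ to see that $\exp(u+q)m=\exp(u)m$, so the $U$-orbit already exhausts the $\mf$-orbit. The only cosmetic slip is the claim $A^*=1+\mf$ (in fact $A^*=\BC^*\cdot(1+\mf)$), which is harmless since you work projectively throughout.
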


\begin{proof}
By Proposition \ref{morb}, for every point $x = [m] \in \BP(A)$ with $m\in \mf$, the $\mf$-orbit of $x$ is $[mA^*]$. Since $X$  is non-degenerate, we have $\mf = U \oplus \Soc(A)$. Then, for all $a\in \mf$, we can write $a = u + q$, where $u \in U$ and $q \in \Soc(A)$. Thus, $\exp(a)m = \exp(u)m$. Therefore, the $\mf$-orbit of $x$ coincides with the $U$-orbit of $x$.  
\end{proof}

Now we describe the $H$-pairs corresponding to additive actions on non-degenerate hypersurfaces that satisfy the \textbf{OP}-condition.

\begin{proposition}\label{Lims}
    Let $(A,U)$ be an $H$-pair, $X$ the corresponding hypersurface of degree $d$, and $\alpha$ the corresponding additive action on $X$. Suppose that $X$ is non-degenerate. Then $\alpha$ satisfies the \textbf{OP}-condition if and only if $m = cu^k$ for all $m\in \mf$ with $\mf^d = 0$, $c\in A^*,\ u\in U$, and $u^{k+1} = 0.$
\end{proposition}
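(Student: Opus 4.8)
The plan is to translate the \textbf{OP}-condition, via Corollary \ref{defcor}(2) and the computation $\lim_{t\to\infty} tm = [m^k]$ (where $k$ is maximal with $m^k\neq 0$), into an algebraic statement about which orbit closures of one-parameter subgroups reach the boundary. By Proposition \ref{complement} (and the corollary to Theorem \ref{Soc}), the boundary $X\setminus O$ consists exactly of the points $[m]$ with $m\in\mf$ and $m^d=0$, and by the last corollary the $U$-orbit of such a point is $[mA^*]$. So $\alpha$ satisfies \textbf{OP} if and only if every boundary orbit $[mA^*]$ (with $m^d=0$) is hit by some limit $\lim_{t\to\infty} tu\circ[1] = [u^k]$ for $u\in U$ with $u^{k+1}=0$. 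This says precisely: for every $m\in\mf$ with $m^d=0$ there exist $u\in U$, $k\geq 1$ with $u^{k+1}=0$, and $c\in A^*$ such that $m = cu^k$, which is the claimed characterization.

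First I would set up the forward direction. Assume $\alpha$ satisfies \textbf{OP}. Fix $m\in\mf$ with $m^d=0$; then $[m]\in X\setminus O$, and by Corollary \ref{defcor} there is $v\in\mf$ (a vector in the Lie algebra of the acting group, which here is $U$) with $\lim_{t\to\infty}(tv\circ[1])\in[mA^*]$. Here I must be careful: the acting group is the additive group of $U$, so $v\in U$; the one-parameter subgroup is $\langle v\rangle$, and its limit on $[1]$ is $[v^k]$ where $k$ is maximal with $v^k\neq0$, i.e. $v^{k+1}=0$. Hence $[v^k]=[mA^*]$ means $v^k = cm$ for some $c\in A^*$, so $m = c^{-1}v^k$ with $u:=v\in U$, $c^{-1}\in A^*$, $u^{k+1}=0$ — exactly the desired form. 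This direction is essentially a dictionary translation and should be short.

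For the converse, assume every $m\in\mf$ with $m^d=0$ can be written as $cu^k$ with $c\in A^*$, $u\in U$, $u^{k+1}=0$. I need to show every boundary orbit is reached by a one-parameter-subgroup limit from the open orbit. Take any boundary point; by the corollary to Theorem \ref{Soc} it is $[m]$ with $m^d=0$, and its orbit is $[mA^*]$. Writing $m=cu^k$ as assumed, we have $[m]=[u^k]$ in $\BP(A)$ (since $c\in A^*$), and $[u^k]=\lim_{t\to\infty}(tu\circ[1])$ because $u\in U$ has $u^{k+1}=0$ so $u^k$ is its top nonzero power. Thus the closure of the $\langle u\rangle$-orbit of $[1]\in O$ meets the orbit of $[m]$, and since this works for every boundary orbit, Corollary \ref{defcor}(2) gives the \textbf{OP}-condition.

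The main subtlety — and the step I would be most careful about — is making sure the one-parameter subgroups under consideration genuinely lie in the acting group $U$ and not merely in $\mf$: the additive action on $X$ is by $U$, not by all of $\mf$, so the limit points available to test the \textbf{OP}-condition are exactly the $[u^k]$ for $u\in U$, and the condition "$u^{k+1}=0$" must be read as "$k$ is maximal with $u^k\neq 0$". A secondary point to verify is that in the forward direction one really may assume the base point is $[1]$ (Corollary \ref{defcor} allows fixing any point in $O$, and $[1]$ lies in the open orbit since its $U$-orbit is $[\exp(U)\cdot 1]=[1+\mf$ reduced mod the $\Soc$ part$]$, which is open). Neither obstacle is deep, but keeping the group $U$ versus the ideal $\mf$ straight throughout is where an error would most likely creep in.
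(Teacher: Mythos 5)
Your proposal is correct and follows essentially the same route as the paper's own proof: identify the boundary points as the $[m]$ with $m^d=0$, their orbits as $[mA^*]$, and the one-parameter limits as $[u^k]$ for $u\in U$ with $u^{k+1}=0$, then match these up in both directions. Your explicit attention to the distinction between the acting group $U$ and the full maximal ideal $\mf$ (and your implicit reading of the statement's "$\mf^d=0$" as the intended "$m^d=0$") is consistent with what the paper does.
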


\begin{proof}
     Let us take a point $x\in X$ from the complement to the open $U$-orbit. Then $x = [m]$, where $m\in \mf$ and $m^d = 0$. The $U$-orbit of $x$ is the set $[mA^*]$. If $\alpha$ satisfies the \textbf{OP}-condition, there exist $u\in U$ and $c\in A^*$ such that 
     $$\lim_{t\to \infty} tu = [u^k] =  [mc],$$
     where $k$ is the maximal positive integer such that $u^k \neq 0$. Therefore, $m = u^kc'$ for some $c' \in A^*$ and $u^{k+1} = 0$. The inverse implication can be proven similarly.
\end{proof}

\begin{lemma}\label{lemlem}
    Let $(A, U)$ be an $H$-pair, $X$ the corresponding hypersurface in $\BP(A)$ of degree~$d$, and $\alpha $ the corresponding additive action on $X$. Suppose that $X$ is non-degenerate and $\alpha$ satisfies the \textbf{OP}-condition. Then 
    \begin{enumerate}
        \item we have $m^2 = 0$  for all $m \in \mf$ such that $m^d = 0$;
        \item the degree $d = 2$ or $3.$
    \end{enumerate}
\end{lemma}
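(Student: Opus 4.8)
The plan is to extract from Proposition~\ref{Lims} the combinatorial constraint on the algebra $A$ and push it to its logical conclusion. By Proposition~\ref{Lims}, the \textbf{OP}-condition says that every $m\in\mf$ with $m^d=0$ can be written as $m=cu^k$ where $u\in U$, $c\in A^*$ and $u^{k+1}=0$. The key idea for part (1) is to test this description on a well-chosen element. First I would show that if $m\in\mf$ satisfies $m^d=0$, then so does $m^2$ (indeed $(m^2)^d=m^{2d}=0$ since $2d>d$), and more generally every element of the ideal $m\mf+ (m)$ that lies in $\mf$ with $d$-th power zero is covered. Now apply the \textbf{OP}-description to $m$: write $m=cu^k$ with $u^{k+1}=0$, $c\in A^*$. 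I claim $k\ge 2$ is impossible unless $m^2=0$ anyway, and $k=1$ forces $m\in cU$, so one reduces to understanding which elements of $\mf$ with $d$-th power zero can possibly equal a unit times a power of an element of $U$. The cleanest route: apply the description simultaneously to $m$ and to $m+w$ for a suitable $w\in\Soc(A)$ (possible since $\mf=U\oplus\Soc(A)$ by Theorem~\ref{Soc} and $(m+w)^d = m^d + \text{(lower terms, all involving }w\mf=0) = 0$); comparing $m=c_1u_1^{k_1}$ and $m+w=c_2u_2^{k_2}$ and projecting modulo $\Soc(A)$, or modulo $\mf^3$, should force the $u_i$-powers to be squares of $\mf$, i.e.\ $m^2\in\mf^3$, and then an induction on the filtration $\mf\supsetneq\mf^2\supsetneq\cdots$ gives $m^2=0$.

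For part (2), once $m^2=0$ for all $m\in\mf$ with $m^d=0$, the plan is to locate an element that necessarily has $d$-th power zero. The natural candidate is any $m\in\mf^2$: since $\dim\Soc(A)=1$ and $\mf^l=\Soc(A)$ where $l$ is the nilpotency length, we have $d=l$ by the Corollary following Theorem~\ref{Soc}, and any $m\in\mf^2$ satisfies $m^{\lceil d/2\rceil}\in\mf^d=0$ — in particular if $d\ge 3$ then $m^2\in\mf^4\subseteq\mf^d$ when $d\le 4$, so $m^d=0$ and hence $m^2=0$ by part (1). Thus for $3\le d\le 4$ we get $\mf^2\cdot\mf^2=0$, i.e.\ $\mf^4=0$, forcing $d=l\le 3$; and for $d\ge 5$ one iterates: $\mf^2$ consists of elements with $d$-th power zero, so $(\mf^2)^2=\mf^4=0$, contradicting $l=d\ge 5$. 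Hence $d\le 3$, and since we assumed $X$ is not a hyperplane, $d=2$ or $d=3$.

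The main obstacle I anticipate is part (1): turning the ``there exist $u,c$'' statement of the \textbf{OP}-condition into a \emph{uniform} structural constraint. The description $m=cu^k$ is existential and the unit $c$ can absorb a lot, so the delicate point is to rule out, for a given $m$ with $m^d=0$ but $m^2\neq 0$, every possible representation as $cu^k$. I expect the argument to hinge on the splitting $\mf=U\oplus\Soc(A)$ together with the freedom to perturb $m$ by socle elements: if $m^2\neq 0$ but $m^d=0$, pick the largest $j$ with $m^j\neq 0$ (so $2\le j<d$, $m^{j+1}=0$), note $m^j\in\mf^j\subseteq$ a small part of the filtration, and derive a contradiction by comparing the filtration degrees of $cu^k$ (which forces $k=j$ and $u^{j}\neq0$, $u^{j+1}=0$, with $u\in U$) against what $U$ generating $A$ permits. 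I would also keep in mind the alternative of invoking \cite[Lemma 2.13]{AZ} in the spirit of its use in Proposition~\ref{pnop}: there, $a^2=0$ for all $a\in\mf$ upgraded to $\mf^2=0$; here one wants an analogous upgrade within the subset $\{m: m^d=0\}$, and checking that the cited lemma (or a variant) applies to this subset is likely where the real work lies.
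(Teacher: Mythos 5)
Part (2) of your proposal is essentially the paper's argument, but part (1) — on which part (2) depends — is not actually proved, and the route you sketch for it is both unnecessary and non-rigorous. You correctly assemble the input from Proposition~\ref{Lims}: every $m\in\mf$ with $m^d=0$ can be written as $m=cu^k$ with $c\in A^*$, $u\in U$ and $u^{k+1}=0$. But you then miss the one-line conclusion: since $k\ge 1$, we have $2k\ge k+1$, so $u^{2k}=0$ and hence $m^2=c^2u^{2k}=0$. That is the entire proof of (1). There is no uniformity issue to resolve, no need to perturb $m$ by socle elements or compare two representations $c_1u_1^{k_1}$ and $c_2u_2^{k_2}$, and no induction on the filtration; in particular your worry that ``$k=1$ forces $m\in cU$'' and leaves a hard case is unfounded, because $k=1$ comes with $u^2=0$ and so $m^2=c^2u^2=0$ directly. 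As written, your part (1) consists of ``should force'' and ``I expect'' statements and does not constitute a proof, so this is a genuine gap — albeit one caused by overlooking an immediate computation rather than by a wrong strategy.

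For part (2) your plan matches the paper: any $m\in\mf^2$ satisfies $m^d\in\mf^{2d}=0$ (note the small slip: $\mf^d=\Soc(A)\neq 0$, it is $\mf^{d+1}$ that vanishes, and $2d\ge d+1$ is what you need), hence $m^2=0$ by part (1); polarization in characteristic $0$ — the paper invokes \cite[Lemma 2.13]{AZ} for exactly this upgrade from ``all squares vanish'' to ``all products vanish'' — gives $\mf^4=(\mf^2)^2=0$, whence $d\le 3$, and $d\ge 2$ since $X$ is not a hyperplane. Your case split between $3\le d\le 4$ and $d\ge 5$ is unnecessary: the same computation handles all $d\ge 2$ at once. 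You should also make the polarization step explicit (from $(a+b)^2=0$ for $a,b\in\mf^2$ deduce $ab=0$) rather than passing silently from ``every element of $\mf^2$ squares to zero'' to ``$\mf^4=0$''.
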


\begin{proof}

1) Let us take $m \in \mf$ such that $m^d = 0$.  Hence, $ m = cu^k$ for some $c\in A^*$ and $u\in U$ with $u^{k+1} = 0$. It implies $m^2 = 0$. 

2) Let us take $m\in \mf^2$. Then $m^d = 0$, which implies $m^2 = 0$. By \cite[Lemma 2.13]{AZ} applied to the algebra $\BC \oplus \mf^2$ and the zero subspace, we obtain that $\mf^4 = 0$. Since $d$ is the minimal number such that $\mf^d \nsubseteq U$, we have $d\leq 3$.
\end{proof}

\begin{proposition}
    Let $(A,U)$ be an $H$-pair, $X$ the corresponding hypersurface of degree $d$, and $\alpha$ the corresponding additive action on $X$. Suppose that $X$ is non-degenerate and $\alpha$ satisfies the \textbf{OP}-condition. Then
    \begin{enumerate}
        \item if $d =2$, then 
        $$(A, U) \simeq (\mathbb{C}[x_1,\ldots, x_n]/(x_ix_j, x_i^2 - x_j^2 \mid i\neq j\ ),\ \langle x_1, \ldots, x_n \rangle);$$
        \item if $d = 3$, then 
        $$(A, U) \simeq (\mathbb{C}[x]/(x^4), \langle x, x^2\rangle).$$
    \end{enumerate}
\end{proposition}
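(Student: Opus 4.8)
The plan is to apply Lemma~\ref{lemlem} first: it both restricts $d$ to $\{2,3\}$ and supplies the crucial fact that $m^2=0$ for every $m\in\mf$ with $m^d=0$. We also use throughout that non-degeneracy (Theorem~\ref{Soc} and the corollary after it) gives $\Soc(A)=\mf^d$, $\dim\mf^d=1$, $\mf^{d+1}=0$ and $\mf=U\oplus\mf^d$.

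\emph{The case $d=2$.} Here $\mf^3=0$, $\mf=U\oplus\mf^2$ and $\dim\mf^2=1$. Since $U\cdot\mf^2\subseteq\mf^3=0$ and $(\mf^2)^2=0$, the algebra $A$ is completely determined by the symmetric bilinear form $b\colon U\times U\to\mf^2\simeq\BC$, $b(u,u')=uu'$. One first checks that $b$ is non-degenerate: if $u_0$ lies in its radical then $u_0\mf=u_0U+u_0\mf^2=0$, so $u_0\in\Soc(A)=\mf^2$ and hence $u_0\in U\cap\mf^2=\{0\}$. Diagonalising $b$ over $\BC$ produces a basis $x_1,\dots$ of $U$ and a generator $w$ of $\mf^2$ with $x_ix_j=\delta_{ij}w$, which is exactly the presentation in part~(1), with $U=\langle x_1,\dots\rangle$ (for $\dim U=1$ this reads $A\simeq\BC[x_1]/(x_1^3)$). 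Note that the \textbf{OP}-condition enters here only through Lemma~\ref{lemlem}; in fact one can check that every non-degenerate quadric with an induced action satisfies it.

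\emph{The case $d=3$.} Now $\mf^4=0$, $\Soc(A)=\mf^3$ is one-dimensional, and $\mf=U\oplus\mf^3$. The core of the proof is to show that $\mf$ is principal, i.e.\ $\dim\mf/\mf^2=1$; granting this, $\mf^3\ne0=\mf^4$ forces $A\simeq\BC[x]/(x^4)$, whence $\dim A=4$, $\dim U=2$, and replacing $x$ by a suitable $x+\alpha x^2+\beta x^3\in U$ normalises $U$ to $\langle x,x^2\rangle$. To establish principality I argue as follows. First, $U^3=\mf^3\ne0$: otherwise $U^3=0$, so $\mf=U+U^2$, $\mf^2=U^2$, and $\mf^3=\mf\cdot U^2=0$, a contradiction. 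Hence the cubic form $u\mapsto u^3$ on $U$ is not identically zero, and by Lemma~\ref{lemlem}(1) its zero set coincides with $Z:=\{u\in U: u^2=0\}$. The set $Z$ is in fact a linear subspace: if $u,v\in Z$ then $(u+v)^3=0$, so $(u+v)^2=0$ by Lemma~\ref{lemlem}(1), and hence $u+v\in Z$. Being simultaneously the zero set of a non-zero cubic, $Z$ is therefore a hyperplane, say $Z=\ker\ell$. Expanding $(u+v)^2=0$ for $u,v\in Z$ shows $Z\cdot Z=0$, and applying Lemma~\ref{lemlem}(1) to $u+a$ with $u\in Z$, $a\in\mf^2$ shows $Z\cdot\mf^2=0$.

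Assume, for contradiction, that $\dim\mf/\mf^2\ge2$. Since $U$ surjects onto $\mf/\mf^2$ with kernel $U\cap\mf^2\subseteq Z$, while $\dim Z=\dim U-1$, we get $Z\supsetneq U\cap\mf^2$, so there is $k\in Z\setminus\mf^2$; fix also $x\in U$ with $\ell(x)=1$, so that $x^3\ne0$ spans $\mf^3$. Using $Z\cdot Z=0$, $Z\cdot\mf^2=0$ and $U=Z\oplus\BC x$, one checks that $vx$ annihilates $\mf$ for every $v\in Z$, hence $vx\in\Soc(A)=\mf^3$; in particular $kx=\lambda x^3$ for some $\lambda\in\BC$. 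Put $z:=k-\lambda x^2$. Then $zx=kx-\lambda x^3=0$; for $v\in Z$ we have $zv=kv-\lambda x^2v=0$ because $kv\in Z\cdot Z=0$ and $x^2v\in\mf^2\cdot Z=0$; and $z\mf^3\subseteq\mf^4=0$. As $\mf=Z\oplus\BC x\oplus\mf^3$, this yields $z\mf=0$, i.e.\ $z\in\Soc(A)$. But $k\notin\mf^2$ forces $z\notin\mf^3$, so $\Soc(A)$ contains the two linearly independent vectors $z$ and $x^3$, contradicting $\dim\Soc(A)=1$. Therefore $\dim\mf/\mf^2=1$, and the proof concludes as indicated above.

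I expect the final step of the $d=3$ case to be the main obstacle. Proposition~\ref{Lims} on its own does not bound $\dim\mf/\mf^2$; the point is to route that information through Lemma~\ref{lemlem} in order to learn that $Z=\{u^2=0\}$ is a hyperplane on which multiplication collapses completely, and then to play this against $\dim\Soc(A)=1$. The decisive computation is $vx\in\Soc(A)$ for $v\in Z$, which manufactures the extra socle element $z=k-\lambda x^2$. The remaining ingredients — the reduction via Lemma~\ref{lemlem}, the bilinear-form classification for $d=2$, and the change of generator for $d=3$ — are routine.
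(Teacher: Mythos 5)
Your proof is correct and follows essentially the same route as the paper: the $d=2$ case is the standard non-degenerate bilinear-form argument, and for $d=3$ both proofs reduce to showing $\mf$ is principal by manufacturing an extra socle element of the form $y-\lambda x^2$, contradicting $\dim\Soc(A)=1$. Your packaging of the $d=3$ case via the hyperplane $Z=\{u\in U \mid u^2=0\}$ (the zero locus of the cubic form $u\mapsto u^3$, linear by Lemma~\ref{lemlem}) is a clean, coordinate-free reorganization of the paper's basis-dependent case analysis of the products $xy_i$, but the underlying computations coincide.
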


\begin{proof}
    1) Suppose $d = 2$. We use the same arguments as in the proof of \cite[Theorem 2.25]{AZ}. We have $\mf^2 = \Soc(A)$ and $\mf^2$ is isomorphic to $\mathbb{C}$ as a vector space. Then the map 
    $$b: U\times U \to \mf^2,\ (x,y) \to xy$$
    is a symmetric bilinear form on $U$. If $u\in U$ belongs to the kernel of this form, then $u\in \mathrm{Soc}(A)$, so $u = 0,$ since $\mf = U \oplus \Soc(A)$ by Theorem \ref{Soc}. Therefore, $b$ is a non-degenerate bilinear symmetric form. Consequently, there exists a basis $x_1, \ldots, x_n \in U$ such that the matrix of $b$ in this basis is the identity matrix. It implies that $\Soc(A) = \langle x_i^2 \rangle$ and $x_ix_j = 0$ when $i\neq j$. So, we have 
    $$(A, U) \simeq (\mathbb{C}[x_1,\ldots, x_n]/(x_ix_j, x_i^2 - x_j^2 \mid i\neq j\ ),\ \langle x_1, \ldots, x_n \rangle).$$

    2) Now we consider the case $d = 3.$ In this case, $\Soc(A) = \mf^3.$ The mapping $m \to m^3$ is non-zero by \cite[Lemma 2.13]{AZ}. So there is $x \in \mf$ such that $x^3 \neq 0$. Then $x\in \mf\setminus \mf^2$ and $\mf^3 = \Soc(A) = \langle x^3 \rangle.$ Suppose that $\dim \mf/\mf^2 > 1.$ Let us choose $y_1, \ldots, y_n \in \mf$ such that the images of $x, y_1, \ldots, y_n$ in $\mf/\mf^2$ form a basis. Then $A$ is generated by $x, y_1, \ldots, y_n.$

    If $y_i^3 \neq 0$, then $y_i^3 = \lambda x^3$ for some $\lambda \in \BC\setminus \{0\}.$ Then there is $\gamma \in \BC$ such that $(y_i + \gamma x)^3 = 0.$ In this case, we replace $y_i$ with $y_i + \gamma x$ and assume that $y_i^3 = 0$ for all $i.$ By Lemma \ref{lemlem}, it implies that $y_i^2 = 0.$ Moreover, 
    $$(y_i+y_j)^3 = y_i^3 + 3y_i^2y_j + 3y_iy_j^2 + y_j^3 = 0.$$ 
    Then we have 
    $$(y_i + y_j)^2 = 2y_iy_j = 0.$$ 
    So $y_iy_j = 0$ for all $i,j.$

    Suppose $xy_i \in \langle x^2, x^3 \rangle$ for some $i$. Then $xy_i = \lambda_1x^2 + \lambda_2x^3$ for some $\lambda_1, \lambda_2  \in \BC.$ Then $$(y_i - 2\lambda_1x - 2\lambda_2x^2)^2 = 0$$ 
    but 
    $$(y_i - 2\lambda_1x - 2\lambda_2x^2)^3 = 4\lambda_1^3x^3.$$ 
    So $\lambda_1 = 0.$ Hence, $x(y_i - \lambda_2 x^2) = 0$. But $(y_i - \lambda_2 x^2)^3 = 0,$ so $(y_i - \lambda_2x^2)^2 = 0$ and $(y_i - \lambda_2x^2)y_j = 0$ for all $j\neq i.$ This means that $(y_i - \lambda_2x^2) \in \Soc(A)$. The element $y_i - \lambda_2x^2$ is linearly independent with $x_i^3$ so $\dim \Soc(A)>1$. This leads to a contradiction.

    Then $xy_i$ is linearly independent of $x^2$ and $x^3.$ The element $x^2y_i$ belongs to $\mf^3$, so $x^2y_i = \lambda_3x^3$. If $\lambda_3 \neq 0$, then $(x - \frac{1}{3\lambda_3}y_i)^3 = 0$. But $(x - \frac{1}{3\lambda_3}y_i)^2 = x^2 - 2\frac{1}{3\lambda_3}xy_i \neq 0$. This contradicts Lemma \ref{lemlem}. So $x^2y_i = 0$. But then $xy_i \in \Soc(A)$, which is a contradiction. 

    Therefore, $A$ is generated by $x$. Then $A \simeq \BC[x]/(x^n).$ Since $d = 3$, we obtain that $n = 4$, and up to an automorphism of $A$, we can assume that $U = \langle x, x^2 \rangle.$

    According to Proposition \ref{Lims}, additive actions on hypersurfaces corresponding to the both $H$-pairs satisfy the \textbf{OP}-condition.

\end{proof}

Using Proposition \ref{equation}, one can determine the equations that define the hypersurfaces.

\begin{theorem}\label{maintheorem}
    Let $X$ be a non-degenerate hypersurface and $\alpha$ an induced additive action satisfying the \textbf{OP}-condition. Then $X$ is either a non-degenerate quadric
    $$X = \{2z_0z_{n+1} =  z_1^2 + \ldots + z_n^2\} \subseteq \BP^{n+1} $$
    with $n\geq 1$, and the action $\alpha$ is given by the formula
    $$(s_1,\ldots, s_n) \circ [z_0:z_1:\ldots:z_{n+1}] = [z_0:z_1 + s_1z_0:\ldots :z_n + s_nz_0: z_{n+1} + \sum_{i=1}^n s_iz_i + \frac{\sum_{i=1}^n s_i^2}{2}z_0],$$
    or $X$ is given by the equation
    $$X = \{z_0^2z_3 - z_0z_1z_2 + \frac{z_1^3}{3} = 0 \} \subseteq \BP^3 $$
    and $\alpha$ is given by the formula
    $$(s_1, s_2) \circ[z_0:z_1:z_2:z_3] = $$ 
    $$[z_0 : z_1 + s_1z_0 : z_2 + s_1z_1 + (s_2 + 
\frac{s_1^2}{2})z_0: z_3 + s_1z_2 + (s_2 + \frac{s_1^2}{2})z_1 + (s_1s_2 + \frac{s_1^3}{6})z_0].$$    

\end{theorem}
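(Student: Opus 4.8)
The plan is to convert the algebraic classification already obtained into explicit projective equations using Theorem \ref{equation}. By the preceding Proposition, the only $H$-pairs giving non-degenerate hypersurfaces with an induced additive action satisfying the \textbf{OP}-condition are, up to isomorphism, $(A,U) = (\BC[x_1,\ldots,x_n]/(x_ix_j,\ x_i^2 - x_j^2 \mid i\neq j),\ \langle x_1,\ldots,x_n\rangle)$ with $d=2$, and $(A,U) = (\BC[x]/(x^4),\ \langle x, x^2\rangle)$ with $d=3$. So it suffices to run each of these two cases through Theorem \ref{equation} and Proposition \ref{APprop} to read off the equation of $X$ and the action formula.

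For the quadric case, I would fix the ordered basis $1, x_1,\ldots,x_n, x_1^2$ of $A$ (note $x_1^2 = \Soc(A)$ spans $\mf^2$), write a general element of the affine chart as $a = z_0\cdot 1 + z_1 x_1 + \cdots + z_n x_n + z_{n+1} x_1^2$ with $z_0\neq 0$, and compute $\pi(\ln(1 + z/z_0))$ where $z = z_1x_1 + \cdots + z_n x_n + z_{n+1}x_1^2$ and $\pi:\mf\to\mf/U \simeq \BK$ is projection onto the $x_1^2$-coordinate. Since $\mf^3 = 0$, we have $\ln(1+w) = w - w^2/2$ for $w = z/z_0$, and $w^2 = (z_1^2+\cdots+z_n^2)x_1^2/z_0^2$ (using $x_ix_j = 0$ for $i\neq j$ and $x_i^2 = x_1^2$), so the $x_1^2$-component of $\ln(1+w)$ is $z_{n+1}/z_0 - (z_1^2+\cdots+z_n^2)/(2z_0^2)$; multiplying by $z_0^d = z_0^2$ gives the homogeneous equation $z_0 z_{n+1} - \tfrac12(z_1^2 + \cdots + z_n^2) = 0$, i.e. $2z_0z_{n+1} = z_1^2 + \cdots + z_n^2$, which is clearly non-degenerate. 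The action formula then comes from computing $\exp(u)a$ for $u = \sum s_i x_i \in U$: expanding $\exp(u) = 1 + \sum s_i x_i + \tfrac12(\sum s_i^2) x_1^2$ and multiplying, one reads off the new coordinates, giving exactly the stated formula.

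For the cubic case, I would take the basis $1, x, x^2, x^3$ of $A = \BC[x]/(x^4)$, so $d = 3$ (since $\mf^3 = \langle x^3\rangle \nsubseteq U$ but $\mf^4 = 0$), write $a = z_0 + z_1 x + z_2 x^2 + z_3 x^3$ in the chart $z_0\neq 0$, set $w = (z_1 x + z_2 x^2 + z_3 x^3)/z_0$, and compute $\ln(1+w) = w - w^2/2 + w^3/3$ modulo $x^4$; the $x^3$-coefficient (since $\mf/U \simeq \BK$ is identified with the $x^3$-component, as $U = \langle x, x^2\rangle$) works out to $z_3/z_0 - z_1z_2/z_0^2 + \tfrac13 z_1^3/z_0^3$, and multiplying by $z_0^3$ yields $z_0^2 z_3 - z_0 z_1 z_2 + \tfrac13 z_1^3 = 0$. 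The action of $u = s_1 x + s_2 x^2$ is obtained by multiplying $a$ by $\exp(u) = 1 + (s_1 x + s_2 x^2) + \tfrac12(s_1 x + s_2 x^2)^2 + \tfrac16(s_1 x + s_2 x^2)^3 = 1 + s_1 x + (s_2 + \tfrac{s_1^2}{2})x^2 + (s_1 s_2 + \tfrac{s_1^3}{6})x^3$ modulo $x^4$, and collecting coefficients of $1, x, x^2, x^3$ gives the stated four new coordinates. I should also remark that the case $n=2$ here, i.e. $X \subseteq \BP^3$, is the genuinely new example; for general $n$ in the cubic story one would need to check (already done in the preceding Proposition) that larger embeddings don't occur.

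The only real content beyond Theorem \ref{equation} and Proposition \ref{APprop} is the bookkeeping: choosing the right ordered bases so that the projective coordinates match the stated equations, and verifying that the multiplication-by-$\exp(u)$ computation produces precisely the displayed action formulas. The main (minor) obstacle is keeping track of the identification $\mf/U \simeq \BK$ and making sure signs and the factorials in $\ln$ and $\exp$ are handled correctly; there is no conceptual difficulty, since non-degeneracy of the quadric is immediate and the cubic's equation is visibly irreducible of degree $3$. I would close by noting that these computations, together with the preceding classification of $H$-pairs, exhaust all non-degenerate hypersurfaces carrying an induced additive action satisfying the \textbf{OP}-condition.
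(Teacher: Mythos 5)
Your proposal is correct and follows exactly the route the paper intends: the paper gives no explicit proof of Theorem \ref{maintheorem} beyond the remark that the equations follow from Theorem \ref{equation} applied to the two $H$-pairs classified in the preceding Proposition, and your computations of $\pi(\ln(1+z/z_0))$ and of multiplication by $\exp(u)$ in the chosen bases carry that out correctly in both cases.
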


Non-degenerate quadrics are smooth. In fact, they are the only smooth hypersurfaces with an additive action; see \cite[Corollary 2.17]{AZ}. There are infinitely many $\mathbb{G}_a^n$-orbits on non-degenerate quadrics for $n\geq 3$ in the notation of Theorem \ref{maintheorem}. Among them, one is open and consists of points with $z_0 \neq 0$. The other one is the fixed point $[0: 0 : \ldots : 0 : 1]$. Additionally, there are infinitely many one-dimensional orbits of the form
$$\{[0: z_1 : \ldots : z_n : *] \},$$
where $z_1,\ldots, z_n$ are fixed, and by $*$ we mean an arbitrary complex number. When we choose $v = (s_1, \ldots, s_n)\in \BG_a^n$ with $\sum_i s_i^2 \neq 0$, we have
$$\lim_{t\to \infty} tv\circ x_0 = [0:\ldots: 0  : 1],$$
where $x_0 = [1:0:\ldots : 0]$. If we choose non-zero $ v = (s_1, \ldots, s_n) \in \mathbb{G}_a^n$ with $\sum_is_i^2 = 0$ then
$$\lim_{t\to \infty} tv\circ x_0 = [0: s_1 : s_2 : \ldots : s_n : \sum_i s_i].$$

The hypersurface $\{z_0^2z_3 - z_0z_1z_2 + \frac{z_1^3}{3} = 0 \} \subseteq \BP^3 $ is not smooth and is not even normal; see \cite[Proposition 3]{ABZ}. It has three $\mathbb{G}_a^2$-orbits. One orbit is open, consisting of points with $z_0 \neq 0$. One is a line $\{ [0 : 0 : 1 : *]\}$, and the last one is a point $[0 : 0 : 0 : 1]$. When we choose $v = (s_1, s_2)$ with $s_1 \neq 0$, we have
$$\lim_{t\to \infty} tv \circ x_0 = [0:0:0:1],$$
where $x_0 = [1:0:0:0]$. If we take $v = (0, 1)$, then
$$\lim_{t\to \infty} tv \circ x_0 = [0:0:1:0].$$

\section*{Degenerate hypersurfaces}

Now let us consider degenerate hypersurfaces. If $(A, U)$ is an $H$-pair corresponding to a degenerate hypersurface, then $W = U \cap \Soc(A) \neq 0$ is an ideal in $A$. The pair $(A/W, U/W)$ is an $H$-pair corresponding to a hypersurface which can be given by the same equation but in a projective space of smaller dimension; see \cite[Proposition 2.20]{AZ}.

\begin{proposition}
    Let $(A, U)$ be an $H$-pair, $X$ be the corresponding hypersurface, and $\alpha$ be the corresponding additive action on $X$. Suppose that $\alpha$ satisfies the \textbf{OP}-condition. We denote by $W$ the ideal $U \cap \Soc(A)$. Then the $H$-pair $(A/W, U/W)$ corresponds to a hypersurface $Y$, and the corresponding additive action $\beta$ on~$Y$ satisfies the \textbf{OP}-condition.
\end{proposition}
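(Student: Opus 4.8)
The plan is to verify that the additive action $\beta$ on $Y = \BP(A/W)$ defined by the $H$-pair $(A/W, U/W)$ inherits the \textbf{OP}-condition from $\alpha$ by transporting the defining limit relations along the quotient map $q\colon A \to A/W$. First I would record the basic structural facts: $W = U \cap \Soc(A)$ is an ideal of $A$ contained in the maximal ideal, $\mf/W$ is the maximal ideal of $A/W$, and $U/W$ is an $(n-1-\dim W)$-dimensional subspace generating $A/W$; this is exactly the reduction of \cite[Proposition 2.20]{AZ}, so $(A/W, U/W)$ is a genuine $H$-pair and $Y$ is the associated hypersurface, of the same degree $d$ as $X$ (since $\mf^d \nsubseteq U$ forces $(\mf/W)^d = \mf^d + W)/W \nsubseteq U/W$, because $\mf^d \subseteq U$ would follow otherwise). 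The key observation is that $q$ intertwines the two actions: for $\bar u \in U/W$ lifting to $u \in U$ and $a \in A$ we have $q(\exp(u)a) = \exp(\bar u)q(a)$, so $q$ induces a $U$-equivariant rational map $\BP(A) \dashrightarrow \BP(A/W)$ which is regular away from $\BP(W)$ and sends $[1]$ to $[1]$, hence restricts to a surjection of $X$ minus a closed subset onto a dense subset of $Y$; in fact it maps the open $U$-orbit of $X$ onto the open $U/W$-orbit of $Y$.

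Next I would use the equivalent formulation of the \textbf{OP}-condition from Corollary \ref{defcor}(2): it suffices to show that for the base point $\bar x_0 = [1] \in Y$ and every orbit $O'' \neq O_Y$ in $Y$ there is a vector $\bar v \in U/W$ with $\lim_{t\to\infty} t\bar v \circ \bar x_0 \in O''$. Given such an orbit $O''$, pick any point $\bar y = [\bar m] \in O''$ with $\bar m \in \mf/W$; lift $\bar m$ to $m \in \mf$. Since $q$ is $U$-equivariant and sends $[1]$ to $[1]$, the point $[m] \in \BP(A)$ lies in $X$ (its image under the equation of $X$, which by Theorem \ref{equation} is $z_0^d\pi(\ln(1+z/z_0))$, vanishes because the equation of $Y$ is literally the same expression read in $A/W$ and $\pi$ factors through $q$). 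Hence $[m]$ lies in some orbit $O' \neq O_X$ of $X$ — it is not in the open orbit because its image $\bar y$ is not in $O_Y$. Applying the \textbf{OP}-condition for $\alpha$ in the form Corollary \ref{defcor}(3): there is a point $x \in O_X$ and a vector $v \in U$ with $\lim_{t\to\infty} tv \circ x = [m]$. Write $v = u$, $\bar u = q(u)$, and apply $q$: since $q$ is continuous (in the classical topology) on a neighborhood of the orbit closure $\overline{\langle v\rangle x}$ — one checks $\BP(W)$ is disjoint from this closure because $[m]\notin \BP(W)$ (as $\bar m \neq 0$) and $x$ lies in the open orbit — we get $\lim_{t\to\infty} t\bar u \circ q(x) = q([m]) = \bar y \in O''$, with $q(x) \in O_Y$. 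Finally, translating by a suitable group element in $U/W$ (as in the proof of $1)\implies 2)$ in Corollary \ref{defcor}) moves $q(x)$ to $\bar x_0$ while preserving the orbit closure, so $\lim_{t\to\infty} t\bar u \circ \bar x_0 \in O''$, which is what we needed.

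The main obstacle, and the step deserving the most care, is the continuity/regularity issue for $q$ along orbit closures: the rational map $\BP(A)\dashrightarrow\BP(A/W)$ is undefined exactly on $\BP(W)$, so one must argue that the relevant $\BG_a$-orbit closures in $X$ stay off $\BP(W)$. For the orbit closure of a point $x$ in the open orbit, by Lemma \ref{limdef} it is a $\BP^1$ consisting of $x$'s orbit plus the single limit point, and that limit point has the shape $[u^k]$ (or $[cm]$) with nonzero image in $A/W$ by construction, while the generic point of the orbit is in the open orbit; so the whole $\BP^1$ avoids $\BP(W)$, and $q$ is regular there, making the passage to the limit legitimate. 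An alternative, cleaner route avoiding geometry altogether is purely algebraic: use Proposition \ref{Lims}. One shows directly that $(A/W, U/W)$ again satisfies its hypothesis, namely every $\bar m \in \mf/W$ with $(\mf/W)^d = 0$, wait — rather every $\bar m$ with $\bar m^d=0$ and more precisely every element of the complement, is of the form $\bar c \bar u^k$ with $\bar c \in (A/W)^*$, $\bar u \in U/W$, $\bar u^{k+1}=0$: indeed lift $\bar m$ to $m\in\mf$; since $q$ is surjective and $[m]$ (suitably chosen) lies in the complement of the open orbit of $X$, Proposition \ref{Lims} for $\alpha$ gives $m = cu^k$ with $c\in A^*$, $u\in U$, $u^{k+1}=0$, and applying $q$ yields $\bar m = q(c)\,q(u)^k$ with $q(c)\in(A/W)^*$ and $q(u)^{k+1}=0$; one must additionally check every such $\bar m$ with $\bar m^d = 0$ arises this way, which follows since the degree is preserved and the complement of the open orbit of $Y$ is the image of the complement of the open orbit of $X$ by Proposition \ref{complement} together with surjectivity of $q$ on the relevant point sets. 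I would present the algebraic argument as the main proof and relegate the geometric picture to a remark.
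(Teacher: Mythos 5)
Your proposal is correct, and the algebraic argument you designate as the main proof is essentially the paper's own: lift a point $[\bar m]$ of the complement of the open orbit of $Y$ to $[m]\in X$ (using that $\bar m^d\in U/W$ forces $m^d\in U$, i.e.\ Proposition \ref{complement}), apply the \textbf{OP}-condition for $\alpha$ to get $[u^k]=[cm]$ with $c\in\exp(U)$ and $u^{k+1}=0$, and push this relation down to $A/W$. The only caution is that Proposition \ref{Lims} is stated for non-degenerate hypersurfaces, so in the degenerate setting you should phrase the criterion via Proposition \ref{complement} and the limit formula $\lim_{t\to\infty}tu=[u^k]$ directly, as you in fact end up doing; the geometric route through the rational map $\BP(A)\dashrightarrow\BP(A/W)$ also works but is unnecessary here.
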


\begin{proof}
The maximal ideal in $A/W$ is the ideal $\overline{\mf} = \mf + W$. Let us denote by $d$ the degree of $X$. Then the degree of $Y$ is also $d$. Consider an element $\overline{m} = m + W \in \overline{\mf}$, where $m\in \mf$. Suppose $[\overline{m}] \in Y $. Then $\overline{m}^d \in U/W$. This implies $m^d \in U$. Therefore, we have $[m] \in X$.

The $U$-orbit of $[m]$ is the set $[\exp(U)m]$. Since $\alpha$ satisfies the \textbf{OP}-condition, there is $u \in U$ such that $[u^k] = [cm]$ where $c = \exp(u')$ for some $u'\in U$ and $u^{k+1} = 0$. But then $[\overline{u}^k] = [\overline{c}\overline{m}],$ where $\overline{u}  = u + W,\ \overline{c} = c + W = \exp(u' + W)$, and $\overline{u}^{k+1} = 0$. Therefore, $\beta$ satisfies the \textbf{OP}-condition.

\end{proof}

\begin{corollary}\label{maincor}
    Let $X \subseteq \BP^{n+1}$ be a projective hypersurface. Suppose that there is an induced additive action $\alpha$ on $X$ which satisfies the \textbf{OP}-condition. Then, in some coordinates $z_0, \ldots, z_{n+1}$ on $\BP^{n+1}$, the hypersurface $X$ is given by one of the following equations:

    $$X = \{2z_0z_{k+1} =  z_1^2 + \ldots + z_k^2\} \subseteq \BP^{n+1} $$
    for some $1 \leq k \leq n$, or

        $$X = \{z_0^2z_3 - z_0z_1z_2 + \frac{z_1^3}{3} = 0 \} \subseteq \BP^{n+1} $$
    for $n \geq 2.$
    
\end{corollary}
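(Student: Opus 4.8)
The plan is to reduce the general (possibly degenerate) case to the non-degenerate case already settled in Theorem~\ref{maintheorem}, using the quotient construction for $H$-pairs. Let $(A,U)$ be the $H$-pair corresponding to $(X,\alpha)$ (it exists by Proposition~\ref{APprop}, since $X$ is not a hyperplane). If $X$ is already non-degenerate, Theorem~\ref{maintheorem} gives the quadric $\{2z_0z_{n+1}=z_1^2+\dots+z_n^2\}$ (the case $k=n$ of the first family, after renaming) or the cubic $\{z_0^2z_3-z_0z_1z_2+z_1^3/3=0\}\subseteq\BP^3$ (the case $n=2$ of the second family), so there is nothing more to do.

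Suppose then that $X$ is degenerate. Set $W=U\cap\Soc(A)$; as noted in the text this is a nonzero ideal of $A$, and by \cite[Proposition 2.20]{AZ} the quotient $H$-pair $(A/W,U/W)$ defines a hypersurface $Y$ given by the same equation but sitting in a projective space of strictly smaller dimension, with the same degree $d$. By the Proposition immediately preceding this corollary, the induced additive action $\beta$ on $Y$ again satisfies the \textbf{OP}-condition. Iterating, we may replace $(A,U)$ by a sequence of such quotients until we reach an $H$-pair whose associated hypersurface is non-degenerate: indeed the dimension drops at each step, the degree stays fixed, and the process stabilizes exactly when $U\cap\Soc(A)=0$, which by Theorem~\ref{Soc} (together with the fact that $\dim\Soc(A)=1$ is forced once $\mf=U\oplus\Soc(A)$) is precisely non-degeneracy. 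One small point to check is that each quotient hypersurface is still not a hyperplane, i.e.\ that the degree $d\ge 2$ is preserved — but this is exactly the content of the degree being unchanged under the quotient.

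So after finitely many steps we obtain a non-degenerate hypersurface $Y_0\subseteq\BP^{m+1}$ carrying an induced additive action satisfying the \textbf{OP}-condition, and by Theorem~\ref{maintheorem} its defining equation is either $2z_0z_{m+1}=z_1^2+\dots+z_m^2$ or $z_0^2z_3-z_0z_1z_2+z_1^3/3=0$. Since the equation of $X$ coincides with that of $Y_0$ (only the ambient dimension changed, by adding variables on which the polynomial does not depend), $X$ is given in suitable coordinates $z_0,\dots,z_{n+1}$ by $2z_0z_{k+1}=z_1^2+\dots+z_k^2$ for some $1\le k\le n$ (here $k=m$ is the number of genuine variables, and $k\le n$ because $X$ is degenerate, hence needs at least one extra variable, while $k\ge 1$ since $X$ is not a hyperplane) or by $z_0^2z_3-z_0z_1z_2+z_1^3/3=0$ in $\BP^{n+1}$ with $n\ge 2$. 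The main thing to be careful about is the bookkeeping of the dimension bounds on $k$: that $k$ can indeed take every value in $\{1,\dots,n\}$ (realized by padding with extra coordinates), and that the cubic, having $m=2$ and thus living intrinsically in $\BP^3$, forces $n+1\ge 3$, i.e.\ $n\ge 2$.
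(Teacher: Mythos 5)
Your proposal is correct and follows essentially the same route the paper intends: reduce to the non-degenerate case by iterating the quotient $(A,U)\mapsto(A/W,U/W)$ with $W=U\cap\Soc(A)$, which preserves the degree, the defining equation, and (by the preceding Proposition) the \textbf{OP}-condition, and then invoke Theorem~\ref{maintheorem}. Your observation that $U\cap\Soc(A)=0$ is equivalent to non-degeneracy (since $U$ has codimension one in $\mf$ and $\Soc(A)\neq 0$) correctly justifies that the iteration terminates at a non-degenerate hypersurface, so the argument is complete.
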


On each of the above hypersurfaces, there is an additive action satisfying the \textbf{OP}-condition. If 

$$X = \{2z_0z_{k+1} =  z_1^2 + \ldots + z_k^2\} \subseteq \BP^{n+1},$$
then there is an additive action
$$(s_1,\ldots, s_k, \ldots, s_n) \circ [z_0:z_1:\ldots:z_{n+1}] = $$
$$[z_0:z_1 + s_1z_0:\ldots :z_k + s_kz_0: z_{k+1} + \sum_{i=1}^k s_iz_i + \frac{\sum_{i=1}^k s_i^2}{2}z_0: z_{k+2} + s_{k+1}z_0: \ldots : z_{n+1} + s_nz_0],$$
and if 
$$X = \{z_0^2z_3 - z_0z_1z_2 + \frac{z_1^3}{3} = 0 \} \subseteq \BP^{n+1},$$
then, similarly, there is an additive action
$$(s_1,s_2, \ldots, s_n) \circ [z_0:z_1:\ldots:z_{n+1}] = $$
 $$[z_0 : z_1 + s_1z_0 : z_2 + s_1z_1 + (s_2 + 
\frac{s_1^2}{2})z_0: z_3 + s_1z_2 + (s_2 + \frac{s_1^2}{2})z_1 + (s_1s_2 + \frac{s_1^3}{6})z_0: $$
$$:z_4 + s_3z_0:\ldots :z_{n+1} + s_nz_0].$$    

According \cite[Theorem 2.2]{Be}, there are at least two induced additive actions on these hypersurfaces. However, other additive actions may not satisfy the \textbf{OP}-condition. For example, on the hypersurface

$$X = \{2z_2z_2 = z_1^2 \} \subseteq \BP^{3} $$
there is an additive action
$$(s_1, s_2) \circ [z_0:z_1:z_2:z_3] = [z_0 : z_1 + s_1z_0 : z_2 + s_1z_1 + \frac{s_1^2}{2}z_0 : z_3 + s_1z_2 + \frac{s_1^2}{2}z_1 + (s_2 + \frac{s_1^3}{6})z_0].$$
But $\lim_{t\to \infty} tv\circ x_0 = [0:0:0:1]$ for all $v \in \BG_a^2\setminus\{0\}$ where $x_0 = [1:0:0:0].$

\end{document}